\newcommand{\mf}{\mathfrak}
\newcommand{\mb}{\mathbb}
\newcommand{\mc}{\mathcal}
\newcommand{\imp}{\rightarrow}
\newcommand{\bimp}{\leftrightarrow}
\newcommand{\Imp}{\Rightarrow}
\newcommand{\sub}{\subseteq}
\newtheorem{fact}[theorem]{Fact}
\title{Residuated Basic Logic I}
\author{Minghui Ma\inst{1} \and Zhe Lin \inst{2}}
\institute{Institute for Logic and Intelligence, Southwest University,\\
Beibei District, Chongqing, 400715, China.\\
\email{mmh.thu@gmail.com}
\and Corresponding Author.\\
Institute of Logic and Cognition, Sun Yat-sen University\\
No. 135, Xingang Xi Road, Guangzhou, China\\
Faculty of Mathematics and Computer Science, Adam Mickiewicz University,
Umultowska 87, 61-614 Pozna\'{n}, Poland\\
\email{pennyshaq@gmail.com}}
\begin{document}

\maketitle

\begin{abstract}
We study the residuated basic logic ($\mathsf{RBL}$) of residuated basic algebra in which the basic implication of Visser's basic propositional logic ($\mathsf{BPL}$) is interpreted as the right residual of a non-associative binary operator $\cdot$ (product). We develop an algebraic system $\mathsf{S_{RBL}}$ of residuated basic algebra by which we show that $\mathsf{RBL}$ is a conservative extension of $\mathsf{BPL}$. We present the sequent formalization $\mathsf{L_{RBL}}$ of $\mathsf{S_{RBL}}$ which is an extension of distributive  full non-associative Lambek calculus ($\mathsf{DFNL}$), and show
that the cut elimination and subformula property hold for it.
\end{abstract}


\section{Introduction and Preliminaries}

Basic propositional logic ($\mathsf{BPL}$) was introduced by Visser \cite{visser81} as a subintuitionistic logic, and recently developed by many other authors (\cite{AR98}, \cite{AR01}, \cite{SO97}, \cite{IKK01}, \cite{SWZ98}). The aim of this paper is to investigate the relationship between $\mathsf{BPL}$ and substructural logics. Inspired from Buszkowski's idea in \cite{Bus11}, we treat the basic implication $\rightarrow$ in $\mathsf{BPL}$ as the right residual of a designated binary operator $\cdot$, in addition, admitting some structural rules (weakening and restricted contraction). Thus $\mathsf{BPL}$ can be extended conservatively to a substructural logic.

The language of basic propositional logic is extended by adding the binary operator $\cdot$ and its left residual $\leftarrow$. Then we introduce the residuated basic logic ($\mathsf{RBL}$) which is the logic of residuated basic algebras\footnotetext[1]{Through our communication with Hiroakira Ono (Japan advanced Institute of Science and Technology), we know that Majid Alizadeh (University of Tehran) has the following unpublished result: basic propositional logic with residuation is a conservative extension of basic propositional logic. Ono's idea is to formalize basic logic with residuation as a sequent system which is an extension of $\mathsf{GL}$ in \cite{GO10}. However, the results in our paper are obtained independently. }. We present the algebraic system $\mathsf{S_{RBL}}$ for residuated basic algebra in terms of which we show that $\mathsf{RBL}$ is a conservative extension of $\mathsf{BPL}$. We present a Gentzen-style sequent formalization $\mathsf{L_{RBL}}$ of $\mathsf{S_{RBL}}$ which makes it easy to compare $\mathsf{RBL}$ with other substructural logics.

Basic algebra is the algebra for basic propositional logic (\cite{Ish02}). We define residuated basic algebra as bounded distributive lattice order residuated groupoid (cf. \cite{Bus11}) enriched with weakening and restricted contraction $(\mathrm{c_r})$ $a\cdot b\leq (a\cdot b) \cdot b$. It turns out that the reduct of residuated basic algebra restricted to the basic algebra type is in fact basic algebra (cf. theorem \ref{thm:arba}). Note that our residuated basic algebra does not contain the unit element of its residuated groupoid reduct. For showing that the residuated basic logic is a conservative extension of $\mathsf{BPL}$, we introduce the algebraic system $\mathsf{S_{RBL}}$ for residuated basic algebra. The main method for proving the conservative extension is the relational semantics of non-associative Lambek calculus, and some techniques and notations are taken from \cite{KF13}.

The Gentzen-style sequent calculus $\mathsf{L_{RBL}}$ is obtained by enriching the sequent calculus of $\mathsf{DFNL}$ ($\mathsf{FNL}$ with distributive law \cite{BusF09}) with weakening and restricted contraction rules. The logic $\mathsf{FNL}$ (\cite{Bus10}) is obtained by adding all lattice operations and their corresponding rules to the logic $\mathsf{NL}$ (non-associative Lambek calculus originally introduced by Lambek \cite{Lam61}) which is strongly complete with respect to residuated groupoids (\cite{Bus10}). The associative variant $\mathsf{L}$ (Lambek calculus) of $\mathsf{NL}$ was also introduced by Lambek \cite{Lam58}.
The formalization style of our system $\mathsf{L_{RBL}}$ was first independently developed by Dunn \cite{Dunn73} and Mints \cite{Mints76} for the positive relevant logic $\mathsf{R}^+$, and later used by Kozak \cite{Koz09} to prove the finite model property for $\mathsf{DFL}$ (distributive full Lambek calculus). Moreover, we prove that the cut elimination, subformula property and disjunction property hold for $\mathsf{L_{RBL}}$. Finally, we discuss the relationships between $\mathsf{BPL}$, $\mathsf{RBL}$ and other logics. As a consequence, the fragment of $\mathsf{RBL}$ restricted to the language of $\mathsf{BPL}$ is equivalent to the implicational fragment of $\mathsf{BPL}$ in \cite{Kik01}.

Now let us give some preliminaries on basic propositional logic, which can be found in \cite{AA04}, \cite{AR98}, \cite{SO97} and \cite{visser81}. The language $\mc{L}_{\mathrm{BPL}}$ of basic propositional logic consists of a set $\mathsf{Prop}$ of propositional letters and connectives $\wedge,\vee,\imp,\bot$ and $\top$. The set of $\mc{L}_{\mathrm{BPL}}$-formulae is defined recursively by the following rule:
\begin{center}
$A::= p\mid \bot\mid\top\mid A\vee A\mid A\wedge A\mid A\imp A$
\end{center}
where $p\in \mathsf{Prop}$. Define $\neg A:=A\imp\bot$, and $A\bimp B:= (A\imp B)\wedge(B\imp A)$.

A BPL-frame is a pair $\mf{F}=(W,R)$ where $W$ is a nonempty sets of states, and $R\sub W^2$ is a transitive relation, i.e., $\forall x,y,z\in W(xRy\wedge yRz\imp xRz)$.
A BPL-model is a tuple $\mf{M}=(W,R,V)$ where $(W,R)$ is a BPL-frame and $V:\mathsf{Prop}\imp \wp(S)$ is a valuation satisfying the following `persistency' condition: for each propositional letter $p$,
if $w\in V(p)$ and $wRu$, then $u\in V(p)$.
The definition of satisfaction relation $\mf{M},w\models A$ is defined as usual (\cite{visser81} and \cite{CZ97}). Especially, for implication, we have the following clause:
\begin{center}
$\mf{M},w\models A\imp B$ iff for all $v\in W$ with $wRv$, $\mf{M},v\models A$ implies $\mf{M},v\models B$.
\end{center}
The notion of frame validity $\mf{F}\models A$ is also defined as usual. Note that the truth of every $\mc{L}_{\mathrm{BPL}}$-formula is persistent: if $\mf{M},w\models A$ and $wRv$, then $\mf{M},v\models A$.

Visser introduced the natural deduction for $\mathsf{BPL}$ in \cite{visser81}. It can be extended to intuitionistic logic ($\mathsf{Int}$) and formal provability logic ($\mathsf{FPL}$). It is already known that, via G\"{o}del's translation, $\mathsf{BPL}$ is embedded into the modal logic $\mathsf{K4}$, $\mathsf{FPL}$ into the G\"{o}del-L\"{o}b modal logic $\mathsf{GL}$, and $\mathsf{Int}$ into the modal logic $\mathsf{S4}$. The Hilber-style axiomatization of $\mathsf{BPL}$, given by Ono and Suzuki in \cite{SO97}, consists of the following axioms and rules:
\begin{itemize}
\item[]($\mathrm{1}$)~$A\imp A$
\item[]($\mathrm{2}$) ~$A\imp(B\imp A)$
\item[]($\mathrm{3}$) ~$(A\imp B)\wedge(B\imp C)\imp(A\imp C)$
\item[]($\mathrm{4}$) ~$(A\imp C)\wedge(B\imp C)\imp (A\vee B \imp C)$
\item[]($\mathrm{5}$) ~$A\wedge B\imp A$
\item[]($\mathrm{6}$) ~$A\wedge B\imp B$
\item[]($\mathrm{7}$) ~$A\imp A\vee B$ 
\item[]($\mathrm{8}$) ~$B\imp A\vee B$
\item[]($\mathrm{9}$) ~$A\imp (B\imp A\wedge B)$
\item[]($\mathrm{10}$) ~$(A\imp B)\wedge (A\imp C)\imp (A\imp B\wedge C)$
\item[]($\mathrm{11}$) ~$A\wedge(B\vee C)\imp (A\wedge B)\vee (A\wedge C)$
\item[]($\mathrm{12}$) ~$\bot\imp A$
\item[]($\mathrm{MP}$)~ from $A$ and $A\imp B$ infer $B$.
\end{itemize}
By $\vdash_\mathsf{BPL} A$ we mean that $A$ is provable (or a theorem) in the above Hilbert-style system.
The following completeness theorem holds for $\mathsf{BPL}$:
\begin{theorem}[\cite{SO97}, \cite {visser81}]
For all $\mc{L}_{\mathrm{BPL}}$-formulas $A$, $\vdash_\mathsf{BPL} A$ iff $\mf{F}\models A$ for all BPL-frames $\mf{F}$ iff $\mf{M}\models A$ for all BPL-models $\mf{M}$.
\end{theorem}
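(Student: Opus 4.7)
The proof breaks into three implications: soundness $\vdash_\mathsf{BPL} A\Rightarrow\mf{F}\models A$, the routine $\mf{F}\models A\Leftrightarrow\mf{M}\models A$ (since frame validity unfolds to validity in every model over that frame), and completeness $\mf{M}\models A$ for every $\mf{M}\Rightarrow\vdash_\mathsf{BPL}A$.

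For \textbf{soundness}, I would induct on the length of a Hilbert-style proof of $A$. Fixing a BPL-model $\mf{M}=(W,R,V)$ and a world $w\in W$, each axiom (1)--(12) is shown to be satisfied at $w$ and $\mathrm{MP}$ is shown to preserve truth. Most cases unfold routinely from the semantic clauses; the noteworthy ones are axiom (2) $A\imp(B\imp A)$, which amounts to persistency of $A$ along $R$ and requires an outer induction extending atomic persistency to complex formulae, axiom (3), which is validated by transitivity of $R$, and axiom (11), which uses that $\models$ behaves lattice-theoretically at each fixed world.

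For \textbf{completeness}, I would build a canonical model. Let $W_c$ be the set of prime $\mathsf{BPL}$-theories (sets closed under $\vdash_\mathsf{BPL}$ with the disjunction property), set $V_c(p)=\setof{\Gamma\in W_c:p\in\Gamma}$, and define $\Gamma R_c\Delta$ iff for every $A\imp B\in\Gamma$, $A\in\Delta$ implies $B\in\Delta$. Transitivity of $R_c$ uses axiom (2): from $A\imp B\in\Gamma$ we obtain $(A\imp B)\imp(C\imp(A\imp B))\in\Gamma$ for any $C$, and choosing $C$ to be a $\mathsf{BPL}$-theorem forces $A\imp B\in\Gamma'$ whenever $\Gamma R_c\Gamma'$, after which $\Gamma' R_c\Gamma''$ delivers $\Gamma R_c\Gamma''$; atom persistency along $R_c$ is the same argument specialised to $A:=p$. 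The core is the \emph{truth lemma} $\mf{M}_c,\Gamma\models A$ iff $A\in\Gamma$, proved by induction on $A$; the $\imp$-case is the crux, reducing $A\imp B\notin\Gamma$ to the existence of a prime $\Delta$ with $\Gamma R_c\Delta$, $A\in\Delta$ and $B\notin\Delta$.

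To produce such a $\Delta$, I would form the $\Gamma$-closure of $\{A\}$---the least set $T$ containing $A$ together with every $\mathsf{BPL}$-theorem, closed under MP and under ``$C\imp D\in\Gamma$ and $C\in T$ imply $D\in T$''---and verify $B\notin T$ (which is exactly the hypothesis $A\imp B\notin\Gamma$). The \textbf{main obstacle} is the ensuing Lindenbaum step extending $T$ to a prime $\Delta$ without introducing $B$ and without violating $\Gamma R_c\Delta$; this requires showing that the $\Gamma$-closure operator commutes suitably with the taking of disjuncts so that at each extension step at least one choice of disjunct simultaneously preserves the exclusion of $B$ and the $R_c$-successor condition. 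Standard distributivity (axiom (11)) is what makes this simultaneous preservation possible.
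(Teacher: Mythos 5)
The paper does not prove this theorem at all: it is imported from \cite{SO97} and \cite{visser81}, so the only question is whether your sketch would go through. Your overall architecture --- soundness by induction on proofs plus a persistency lemma, completeness by a canonical model of prime theories with $\Gamma R_c\Delta$ defined via detachment of implications of $\Gamma$ inside $\Delta$, and the prime-extension step as the crux --- is indeed the standard route in those references. One small correction on soundness: axiom (3) is valid on \emph{all} frames and does not use transitivity; transitivity is consumed exactly once, in the $\imp$-case of the persistency lemma, which is then what validates axiom (2).

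The genuine gap is your requirement that the successor set $T$ (and the worlds themselves) be \emph{closed under MP}. This cannot work. The set of formulas true at a world of a BPL-model is not closed under modus ponens, because $w\models A\imp B$ constrains only the $R$-successors of $w$ and $R$ need not be reflexive; correspondingly $p\wedge(p\imp q)\imp q$ (the formula $(\ddag)$ of Section 5) is not a theorem of $\mathsf{BPL}$. If every canonical world were MP-closed and the truth lemma held, the canonical model would validate $(\ddag)$ and your completeness argument would wrongly certify it as a theorem. Concretely, your claim that ``$B\notin T$ is exactly the hypothesis $A\imp B\notin\Gamma$'' fails: take $A=p\wedge(p\imp q)$ and $B=q$; since $A\in T$ and the axioms (5), (6) give the theorems $A\imp p$ and $A\imp(p\imp q)$, MP-closure puts $p$, $p\imp q$ and then $q$ into $T$, even though $A\imp q\notin\Gamma$. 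The repair is to take worlds to be prime \emph{filters} of the Lindenbaum algebra --- closed under $\wedge$, upward closed under provable implication (so MP against theorems only), prime, and $\bot$-free --- and to take $T=\{D: A\imp D\in\Gamma\}$, which is already a filter closed under $\Gamma$-detachment by axioms (3) and (10) and omits $B$ by hypothesis. With that notion of world your transitivity argument for $R_c$ survives (it only detaches theorem-implications), and the Zorn/prime-extension step preserving both $B\notin\Delta$ and $\Gamma R_c\Delta$ goes through as you anticipate, using axiom (4) together with distributivity (11).
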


Just as Heyting algebras for intuitionistic logic, there is also a variety of algebras for $\mathsf{BPL}$, which is called the variety of basic algebras.

\begin{definition}[\cite{AA04}]
A basic algebra $\mathbf{A}=(\mathsf{A},\wedge,\vee,\top,\bot,\rightarrow)$ is an algebra such that $(\mathsf{A},\wedge,\vee,\top, \bot)$ is a bounded distributive lattice and $\rightarrow$ is a binary operation over $A$ satisfying the following conditions: for all $a,b,c\in \mathsf{A}$,
\begin{itemize}
\item[(1)] $a\rightarrow(b\wedge c)=(a\rightarrow b)\wedge(a\rightarrow c)$.
\item[(2)] $(b\vee c)\rightarrow a=(b\rightarrow a)\wedge(c\rightarrow a)$.
\item[(3)] $a\rightarrow a= \top$.
\item[(4)] $a\leq \top \rightarrow a$.
\item[(5)] $(a\rightarrow b)\wedge(b\rightarrow c)\leq a \rightarrow c$.
\end{itemize}
The operation $\imp$ in a basic algebra  is said to be {\em basic implication}.
\end{definition}

\begin{fact}[\cite{AA04}]
For any basic algebra $\mathbf{A}=(\mathsf{A},\wedge,\vee,\top,\bot,\rightarrow)$ and $a,b,c\in \mathsf{A}$,
\begin{itemize}
\item[$\mathrm{(1)}$] if $a\leq b$, then $c\imp a\leq c\imp b$, $b\imp c\leq a\imp c$ and $a\imp b=\top$.
\item[$\mathrm{(2)}$] if $a\wedge b\leq c$, then $a\leq b\imp c$.
\end{itemize}
\end{fact}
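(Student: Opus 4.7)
For part (1), my plan is to exploit the order-theoretic equivalences $a \leq b \Leftrightarrow a \wedge b = a \Leftrightarrow a \vee b = b$ together with the distributive axioms (1) and (2) that push $\imp$ through meet in the consequent and through join in the antecedent. To show monotonicity in the consequent, I would assume $a \leq b$, rewrite $a = a \wedge b$, and apply axiom (1): $c \imp a = c \imp (a \wedge b) = (c \imp a) \wedge (c \imp b)$, so $c \imp a \leq c \imp b$. The antitonicity in the antecedent is symmetric, using $b = a \vee b$ and axiom (2). Finally, the claim $a \imp b = \top$ falls out of monotonicity specialised at $c = a$: by axiom (3), $\top = a \imp a \leq a \imp b$.

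For part (2), I plan a short detour through the auxiliary inequality $a \leq b \imp a$, which holds for all $a, b$ in any basic algebra. This is obtained from axiom (4), $a \leq \top \imp a$, combined with antitonicity in the antecedent from part (1): since $b \leq \top$, we have $\top \imp a \leq b \imp a$, whence $a \leq b \imp a$. Now suppose $a \wedge b \leq c$. By monotonicity in the consequent (part (1) again), $b \imp (a \wedge b) \leq b \imp c$, and axioms (1) and (3) simplify the left-hand side: $b \imp (a \wedge b) = (b \imp a) \wedge (b \imp b) = (b \imp a) \wedge \top = b \imp a$. Chaining $a \leq b \imp a \leq b \imp c$ completes the proof.

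The main subtlety is that (2) is only the easy half of the Heyting residuation law; basic algebras are strictly weaker than Heyting algebras, so the converse direction $a \leq b \imp c \Rightarrow a \wedge b \leq c$ is generally unavailable and there is no direct residuation to unpack. The key insight is therefore that the weakening-style inequality $a \leq b \imp a$, itself a consequence of axiom (4) plus the monotonicity from part (1), is exactly what lets us reduce (2) to an application of part (1) together with the simplification of $b \imp (a \wedge b)$ furnished by axioms (1) and (3).
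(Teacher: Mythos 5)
Your proof is correct. Note that the paper itself offers no proof of this Fact---it is stated with a citation to [AA04]---so there is nothing to compare against; but your derivation uses only the five defining axioms of a basic algebra and is sound throughout: part (1) follows from rewriting $a\leq b$ as $a=a\wedge b$ (resp.\ $b=a\vee b$) and applying the distribution axioms (1) and (2), with $a\imp b=\top$ obtained from $\top=a\imp a\leq a\imp b$; and part (2) correctly routes through the weakening inequality $a\leq b\imp a$ (axiom (4) plus antitonicity) and the simplification $b\imp(a\wedge b)=b\imp a$. Your closing observation---that only this half of the residuation law holds, since basic implication is not the residual of $\wedge$---is exactly the point the surrounding text of the paper is making.
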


Moreover, the relationship between basic algebra and Heyting algebra is clear by the following result from \cite{AA04}: a basic algebra $\mathbf{A}$ is a Heyting algebra iff $a=\top\imp a $ for all $a\in \mathsf{A}$. Hence every Heyting algebra is a basic algebra.

Given a basic algebra $\mathbf{A}$ and an $\mc{L}_{\mathrm{BPL}}$-formula $A$, we say that $A$ is {\em valid} in $\mathbf{A}$ (notation: $\mathbf{A}\models A$), if the equation $A=\top$ holds in $\mathbf{A}$, i.e., the formula $A$ denotes the top element in the algebra under any assignment of propositional letters in $A$. Let $\mathsf{\mathbb{BLA}}$ be the class of all basic algebras. We mean by $\mathsf{\mathbb{BLA}}\models A$ that $A$ is valid in every basic algebra. It's already known that $\mathsf{BPL}$ is complete with respect to $\mathsf{\mathbb{BLA}}$ (\cite{AA04}), i.e., for all $\mc{L}_{\mathrm{BPL}}$-formulae $A$, $\vdash_{\mathsf{BPL}} A$ iff $\mathsf{\mathbb{BLA}}\models A$.

\section{Basic Implication and Residuation}
It is well-known that the Heyting implication $\imp_H$ is the residual of $\wedge$, i.e., for any $a,b,c$ in a Heyting algebra,
$c\leq a\imp_H b$ iff $a\wedge c\leq b$.
However, the basic implication is not the residual of $\wedge$ in basic algebra. Let us introduce an binary operator $\cdot$ the right residual of which is supposed to be the basic implication. Then an algebra $(\mathsf{A},\cdot,\imp,\leftarrow,\leq)$ is said to be a {\em residuated groupoid}, if $(\mathsf{A},\leq)$ is a poset, and $\cdot,\imp$ and $\leftarrow$ are binary operators satisfying the following residuation law:
\begin{center}
(RES) $a\cdot b\leq c$ iff $b\leq a\imp c$ iff $a\leq c\leftarrow b$.
\end{center}
The associativity of the operator $\cdot$ is not assumed in residuated groupoid.

\begin{definition}
A {\em residuated basic algebra} ($\mathbf{RBA}$) is an algebra $\mathbf{A}=(\mathsf{A},\wedge,\vee,\top,\bot,\rightarrow,\leftarrow,\cdot)$ such that $(\mathsf{A},\wedge,\vee,\top,\bot)$ is a bounded distributive lattice and $(\mathsf{A},\rightarrow, \leftarrow, \cdot,\leq)$ is a residuated groupoid satisfying the following axioms: for all $a,b,c\in \mathsf{A}$,
\begin{align*}
(\mathrm{w}_1)~& a\cdot\top \leq a;~~
(\mathrm{w}_2)~\top\cdot a\leq a;~~
(\mathrm{c_r})~a\cdot b\leq (a\cdot b)\cdot b
\end{align*}
where $\leq$ is the lattice order. Let $\mb{RBA}$ be the class of all residuated basic algebras.
\end{definition}
For any residuated basic algebra, it is easy to show (i) $a\imp b = \bigvee\{x\in \mathsf{A}:a\cdot x\leq b\}$; and (ii) $a\leftarrow b = \bigvee\{x\in \mathsf{A}:x\cdot b\leq b\}$ (the least upper bound).

\begin{remark}\label{remark:rba}
The element $\top$ in a residuated basic algebra $\mathbf{A}$ is not the unit of $\mathbf{A}$'s residuated groupoid reduct. If we enrich residuated basic algebra by constant $1$ (the unit of residuated groupoid), then by $(\mathrm{w}_1)$ and $(\mathrm{w}_2)$, one can easily prove that $1=\top$. It follows that $\top\imp a\leq a$, ans so the implication $\imp$ becomes Heyting.
\end{remark}

\begin{proposition}\label{prop:rbamon}
For any residuated basic algebra $\mathbf{A}$ and $a,b,c\in \mathsf{A}$,
\begin{itemize}
\item[(i)] $(b\vee c)\cdot a \leq b\cdot a \vee  c\cdot a$.
\item[(ii)] $a\cdot (b\cdot c)\leq (a\cdot b)\cdot c$.
\item[(iii)] if $a\leq b$, then $c\cdot a\leq c\cdot b$ and $a\cdot c\leq b\cdot c$.
\item[(iv)] if $a\leq b$, then $c\imp a\leq c\imp b$ and $b\imp c\leq a\imp c$.
\item[(v)] if $a\leq b$, then $a\leftarrow c\leq b\leftarrow a$ and $c\leftarrow b\leq c\leftarrow a$.
\end{itemize}
\end{proposition}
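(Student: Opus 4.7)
The plan is to treat the items in the order (iii), (iv), (v), (i), (ii), so that the routine monotonicity facts are already in hand when I reach the one non-routine claim.

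For (iii)--(v) I will use the standard residuated-groupoid template. Starting from $c\cdot b\leq c\cdot b$, (RES) gives $b\leq c\imp(c\cdot b)$; composing with the hypothesis $a\leq b$ and residuating back yields $c\cdot a\leq c\cdot b$, and the right-multiplication version is symmetric, which settles (iii). For the isotonicity and antitonicity of $\imp$ and $\leftarrow$ in (iv) and (v), I apply (RES) to the counit inequalities $c\cdot(c\imp a)\leq a$ and $(c\leftarrow a)\cdot a\leq c$ and then transport the resulting inequality through $a\leq b$, using (iii) to justify the necessary monotonicity step. For (i), (RES) rewrites $(b\vee c)\cdot a\leq b\cdot a\vee c\cdot a$ as $b\vee c\leq(b\cdot a\vee c\cdot a)\leftarrow a$; by the universal property of the join in the lattice, it suffices to check this for $b$ and $c$ separately, and each of $b\leq(b\cdot a\vee c\cdot a)\leftarrow a$ and the analogous statement for $c$ is just another application of (RES) to the trivial inclusions $b\cdot a\leq b\cdot a\vee c\cdot a$ and $c\cdot a\leq b\cdot a\vee c\cdot a$.

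The real obstacle is (ii). A quick check against the relational semantics of $\mathsf{BPL}$ shows that $a\cdot(b\cdot c)\leq(a\cdot b)\cdot c$ encodes the transitivity of the accessibility relation, so pure residuation plus weakening will not be enough on their own; the contraction axiom $(\mathrm{c_r})$ has to enter essentially. The trick I will use is to apply $(\mathrm{c_r})$ with the right factor $b\cdot c$ in place of $b$, which gives
\[
a\cdot(b\cdot c)\;\leq\;\bigl(a\cdot(b\cdot c)\bigr)\cdot(b\cdot c),
\]
and then to collapse the two copies of $b\cdot c$ asymmetrically. Using $(\mathrm{w}_1)$ together with (iii) I have $b\cdot c\leq b\cdot\top\leq b$, and using $(\mathrm{w}_2)$ together with (iii) I have $b\cdot c\leq\top\cdot c\leq c$. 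Shrinking the outer occurrence of $b\cdot c$ to $c$ and the inner one to $b$, each by a single application of (iii), produces the chain
\[
a\cdot(b\cdot c)\;\leq\;\bigl(a\cdot(b\cdot c)\bigr)\cdot c\;\leq\;(a\cdot b)\cdot c,
\]
which is the required quasi-associativity. The one thing to watch is that (iii) really has to be proved before (ii) is attempted, since both the derivation of the two weakening inequalities $b\cdot c\leq b$, $b\cdot c\leq c$ and the two collapsing steps in the display rely on the monotonicity of $\cdot$ in each argument.
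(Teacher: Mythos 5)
Your proposal is correct and follows essentially the same route as the paper: (i) and (iii)--(v) by the standard residuation/join arguments, and (ii) by applying $(\mathrm{c_r})$ with right factor $b\cdot c$ and then collapsing the two copies asymmetrically to $c$ and $b$ via the weakening inequalities $b\cdot c\leq c$ and $b\cdot c\leq b$. Your observation that $(\mathrm{w}_2)$ is needed for $b\cdot c\leq c$ is in fact slightly more careful than the paper's own wording, which cites only $(\mathrm{w}_1)$ there.
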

\begin{proof}
It is easy to see that (iii)-(iv) are monotonicity laws which hold in all residuated groupoid. From (iii), we can easily derive that $a\leq b$ and $c\leq d$ imply $a\cdot c \leq b\cdot d$. For (i), since $b\cdot a \leq (b\cdot a)\vee (c\cdot a )$ and $c\cdot a\leq (b\cdot a)\vee (c\cdot a)$, we get $b\leq  ((b\cdot a)\vee (c\cdot a))\leftarrow a$ and $c\leq  ((b\cdot a)\vee (c\cdot a))\leftarrow a$. Hence $(b\vee c)\leq ( (b\cdot a)\vee (c\cdot a))\leftarrow a$, which yields $(b\vee c)\cdot a \leq (b\cdot a)\vee (c\cdot a)$. For (ii), first by ($\mathrm{w_1}$) and (iii), we have $b\cdot c\leq b$ and $b\cdot c\leq c$. By (iii) again, we get $a\cdot (b\cdot c)\leq a\cdot b$. Since $b\cdot c\leq c$, we get $(a\cdot (b\cdot c))\cdot (b\cdot c)\leq (a\cdot b)\cdot c$. By $(\mathrm{c_r})$, we get $a\cdot (b\cdot c)\leq (a\cdot b)\cdot c$.
\end{proof}

Moreove, for any (residuated) basic algebra $\mathbf{A}$ and $a,b\in \mathsf{A}$, it is easy to check that $a=b$ iff $\forall x\in \mathsf{A}(x\leq a\leftrightarrow x\leq b)$. This gives a way for showing that an equation $a=b$ hold in $\mathbf{A}$.
\begin{theorem}\label{thm:arba}
Let $\mathbf{A} = (\mathsf{A},\wedge,\vee,\top,\bot,\rightarrow,\leftarrow, \cdot)$ be a residuated basic algebra. Then $(\mathsf{A},\wedge,\vee,$ $\top,\bot,\rightarrow)$ is a basic algebra.
\end{theorem}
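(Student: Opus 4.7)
The plan is to verify, one by one, the five defining conditions of a basic algebra for the reduct $(\mathsf{A},\wedge,\vee,\top,\bot,\imp)$, exploiting only the residuation law (RES), the monotonicity laws in Proposition~\ref{prop:rbamon}, and the three extra axioms $(\mathrm{w}_1),(\mathrm{w}_2),(\mathrm{c_r})$. The bounded distributive lattice part comes for free from the definition of residuated basic algebra, so no work is required there.

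The easy conditions are (1)--(4). For (3), observe that by (RES), $\top\leq a\imp a$ amounts to $a\cdot\top\leq a$, which is exactly $(\mathrm{w}_1)$; the reverse direction is trivial since $\top$ is the top element. For (4), $a\leq \top\imp a$ amounts by (RES) to $\top\cdot a\leq a$, which is $(\mathrm{w}_2)$. For (1), the direction $a\imp(b\wedge c)\leq(a\imp b)\wedge(a\imp c)$ follows from the antitone/monotone laws (iv), and the converse reduces via (RES) to showing $a\cdot\bigl((a\imp b)\wedge(a\imp c)\bigr)\leq b\wedge c$, which drops out by monotonicity (iii) of $\cdot$ together with the counit $a\cdot(a\imp x)\leq x$ obtained from (RES). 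Condition (2) is proved symmetrically: one direction uses (iv), and for the other, (RES) turns the claim into $(b\vee c)\cdot\bigl((b\imp a)\wedge(c\imp a)\bigr)\leq a$; applying the distributivity inequality $(b\vee c)\cdot x\leq b\cdot x\vee c\cdot x$ from Proposition~\ref{prop:rbamon}(i) and then the counit reduces it to $a\vee a\leq a$.

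The main obstacle, and the only place where the contraction axiom $(\mathrm{c_r})$ is genuinely used, is condition (5). The plan is to set $x=(a\imp b)\wedge(b\imp c)$ and reduce $(a\imp b)\wedge(b\imp c)\leq a\imp c$ by (RES) to $a\cdot x\leq c$. From $x\leq a\imp b$ and (RES) I get $a\cdot x\leq b$. Applying $(\mathrm{c_r})$ and then monotonicity (iii) twice, together with $x\leq b\imp c$ and the counit $b\cdot(b\imp c)\leq c$, I obtain
\[
a\cdot x\;\leq\;(a\cdot x)\cdot x\;\leq\;b\cdot x\;\leq\;b\cdot(b\imp c)\;\leq\;c,
\]
which closes the argument. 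The delicate point to get right is exactly this chain: $(\mathrm{c_r})$ is what lets one ``duplicate'' $x$ so that one copy can absorb $a$ into $b$ while the other copy transports $b$ to $c$; without it, only associativity-like manipulations remain and they do not suffice since $\cdot$ is non-associative in a residuated basic algebra.
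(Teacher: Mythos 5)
Your proposal is correct and takes essentially the same route as the paper's own proof: conditions (1)--(4) are handled by residuation, the monotonicity laws and Proposition~\ref{prop:rbamon}(i), and condition (5) hinges on exactly the paper's chain $a\cdot x\leq(a\cdot x)\cdot x\leq b\cdot x\leq c$ via $(\mathrm{c_r})$. The only cosmetic difference is that you verify the equations by proving the two inequalities directly with the counit $a\cdot(a\imp x)\leq x$, whereas the paper tests both sides against an arbitrary element $x$; these are interchangeable.
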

\begin{proof}
It suffices to show that all axioms of basic algebra hold in every residuated basic algebra. Obviously, $(\mathsf{A},\wedge,\vee,\top,\bot)$ is a bounded distributive lattice.

(i) Assume $x\leq a\rightarrow (b\wedge c)$. By residuation, $a\cdot x\leq b\wedge c$. Since $b\wedge c\leq b$ and $b\wedge c\leq c$, we get $a\cdot x\leq b$ and $a \cdot x \leq c$. Thus $x\leq a\rightarrow b$ and $x\leq a\rightarrow c$. Hence $x\leq (a\rightarrow b)\wedge (a\rightarrow c)$. Conversely, assume $x\leq (a\rightarrow b)\wedge (a\rightarrow c)$. Then $x\leq a\rightarrow b$ and $x\leq a\rightarrow c$. By residuation, $ a\cdot x\leq b$ and $a\cdot x\leq c$. Thus $a\cdot x\leq b\wedge c$, and so $x\leq (a\rightarrow b\wedge c)$. Hence $a\rightarrow (b\wedge c) =(a\rightarrow b)\wedge (a\rightarrow c)$.

(ii) Assume $x\leq (b\vee c) \rightarrow a $. By residuation, we get $(b\vee c )\cdot x \leq a $. Since $b\leq b\vee c$, we have $b\cdot x\leq (b\vee c)\cdot x$. Then $b\cdot x \leq a $. Similarly we get $c\cdot x \leq a$. Thus by residuation, we have $x\leq b\rightarrow a $ and $x\leq c\rightarrow a $. Hence $x\leq (b\rightarrow a)\wedge (c\rightarrow a)$. Conversely, assume $x\leq (b\rightarrow a)\wedge (c\rightarrow a)$. Then $x\leq b\imp a$ and $x\leq c\imp a$.
By residuation, $b\cdot x\leq a$ and $c\cdot x \leq a$. Then $b\cdot x\vee c\cdot x\leq a$. By proposition \ref{prop:rbamon} (i), we get $(b\vee c)\cdot x\leq b\cdot x \vee c \cdot x $, and so we get $(b\vee c)\cdot x \leq a$. Hence $(b\vee c)\rightarrow a =(b\rightarrow a)\wedge (c\rightarrow a)$.

(iii) Obviously $a\rightarrow a \leq \top$. Since $a \cdot \top \leq a$, we get $\top \leq a\rightarrow a$. Hence $a\imp a =\top$. Again, since $\top \cdot a \leq a$, by residuation we get $a \leq \top \rightarrow  a$.

(iv) Assume $x\leq (a\rightarrow b) \wedge (b\rightarrow c)$. Then $x\leq(a\rightarrow b)$ and $x\leq (b \rightarrow c)$, whence $a\cdot x \leq b$ and $b\cdot x\leq c$. Then by \ref{prop:rbamon} (iii), we get $(a\cdot x)\cdot x \leq b\cdot x$. Hence $(a\cdot x)\cdot x \leq c$. Since $a\cdot x \leq (a\cdot x)\cdot x$, we have $a\cdot x \leq c$, which yields $ x\leq a\rightarrow c$.  Hence $(a\rightarrow b)\wedge (b\rightarrow c)\leq (a\rightarrow c)$.
\end{proof}

Now let us define the residuated basic logic ($\mathsf{RBL}$) of residuated basic algebras. The language $\mc{L}_{\mathrm{RBL}}$ is the extension of $\mc{L}_{\mathrm{BPL}}$ by adding binary operators $\cdot$ and $\leftarrow$. The set of all $\mc{L}_{\mathrm{RBL}}$-formulae is defined recursively by:
\begin{center}
$A::=p\mid\bot\mid\top\mid A\wedge A\mid A\vee A\mid A\cdot A\mid A\imp A\mid A\leftarrow A$
\end{center}
where $p\in\mathsf{Prop}$. An {\em assignment} $\sigma$ in a residuated basic algebra $\mathbf{A}$ is a homomorphism from the $\mc{L}_{\mathrm{RBL}}$-formula algebra to $\mathbf{A}$. An $\mc{L}_{\mathrm{RBL}}$-formula $A$ is {\em valid} in $\mathbf{A}$ (notation: $\mathbf{A}\models A$), if
$\sigma(A)=\top$ for all assignments $\sigma$ in $\mathbf{A}$. By $\mb{RBA}\models A$ we mean that $\mathbf{A}\models A$
for all residuated basic algebras. Thus we define the residuated basic logic $\mathsf{RBL} = \{A\mid\mb{RBA}\models A$ and $A$ is an $\mc{L}_{\mathrm{RBL}}$-formula$\}$.

\section{Conservative Extension}
A logic $L_2$ is said to be a {\em conservative extension} of $L_1$, if every formula provable in $L_1$ is provable in $L_2$.
In this section, we introduce an algebraic system $\mathsf{S_{RBL}}$ for residuated basic algebra, and show that  $\mathsf{S_{RBL}}$ is a conservative extension of $\mathsf{BPL}$ (cf. theorem \ref{thm:con}). It follows that $\mathsf{RBL}$ is a conservative extension of $\mathsf{BPL}$.

Simple $\mathsf{S_{RBL}}$-sequents are expressions of the form $A \Rightarrow B$ where $A$ and $B$ are $\mc{L}_{\mathrm{RBL}}$-formulae.
The algebraic system $\mathsf{S_{RBL}}$ for residuated basic algebras consists of the following axioms and rules:
\begin{displaymath}
(\mathrm{Id})~A \Rightarrow A\quad (\bot)~\bot\Rightarrow A\quad (\top)~A \Rightarrow \top \quad(\mathrm{Cut})~ \frac{A\Rightarrow B \quad B\Rightarrow C}{A\Rightarrow C}
\end{displaymath}
\begin{displaymath}
(\mathrm{D})~ A\wedge(B\vee C)\Rightarrow (A\wedge B)\vee (A\wedge C)
\end{displaymath}
\begin{displaymath}
(\mathrm{W}_l)~ A\cdot \top \Rightarrow A\quad (\mathrm{W}_r)~ \top\cdot A\Rightarrow A
\quad
\mathrm{(RC)}~ A\cdot B\Rightarrow (A\cdot B)\cdot B
\end{displaymath}
\begin{displaymath}
(\mathrm{R1})~ \frac{A\cdot B \Rightarrow C}{B\Rightarrow A\rightarrow C}\quad
(\mathrm{R2})~\frac{B \Rightarrow A\rightarrow C}{A\cdot B\Rightarrow C}
\end{displaymath}
\begin{displaymath}
(\mathrm{R3})~ \frac{A\cdot B \Rightarrow C}{A\Rightarrow C\leftarrow B}\quad
(\mathrm{R4}) ~\frac{A \Rightarrow C\leftarrow B}{A\cdot B\Rightarrow C}
\end{displaymath}
\begin{displaymath}
\mathrm{(\wedge L)}~\frac{A_i\Rightarrow B}{A_1\wedge A_2 \Rightarrow B},~{i\in\{1,2\}}\quad
\mathrm{(\wedge R)}~\frac{C\Rightarrow A\quad C \Rightarrow B}{C \Rightarrow A\wedge B}
\end{displaymath}
  \begin{displaymath}
(\mathrm{\vee L})~\frac{A\Rightarrow C\quad B\Rightarrow C}{A\vee B \Rightarrow C}\quad
(\mathrm{\vee R)}~ \frac{C \Rightarrow A_i}{C \Rightarrow A_1\vee A_2},~{i\in\{1,2\}}
   \end{displaymath}
By $\vdash_{\mathsf{S_{RBL}}}A\Imp B$ we mean that $A\Imp B$ is provable in $\mathsf{S_{RBL}}$.
We say a $\mathsf{S_{RBL}}$-formula $A$ is {\em provable} in $\mathsf{S_{RBL}}$, if $\vdash_{\mathsf{S_{RBL}}} \top\Leftrightarrow A$.

It is easy to prove that the system $\mathsf{S_{RBL}}$ is complete with respect to the class of all bounded distributive lattice order residuated groupoid enriched with weakening and restriced contraction ($\mathrm{c_r}$), which is exactly the class $\mathbb{RBA}$ of all residuated basic algebras. Let us explain some basic notions. An assignment $\sigma$ in a residuated basic algebra $\mathbf{A}$ is a homomorphism from the $\mc{L}_{\mathrm{RBL}}$--formula algebra into $\mathbf{A}$. A sequent $A\Imp B$ is true under an assignment $\sigma$ in a residuated basic algebra $\mathbf{A}$, if $\sigma(A)\leq \sigma(B)$ in $\mathbf{A}$.
We say that $A\Imp B$ is valid in $\mathbf{A}$, if $A\Imp B$ is true under all assignments in $\mathbf{A}$. By $\mb{RBA}\models A\Imp B$ we mean that $A\Imp B$ is valid in all residuated basic algebras.
The completeness means that $\vdash_{\mathsf{S_{RBL}}} A\Imp B$ iff $\mb{RBA}\models A\Imp B$.

Let $\mathsf{S^*_{RBL}}$ be the system obtained from $\mathsf{S_{RBL}}$ by replacing $(\mathrm{W}_l)$, $(\mathrm{W}_r)$ and $(\mathrm{RC})$ by the following three axioms respectively:
\begin{displaymath}
(\top^1)~ \top\Rightarrow A\rightarrow A~~(\top^2)~ A\Rightarrow \top \rightarrow A
~~
(\mathrm{Tr})~ (A\imp B)\wedge(B\imp C)\Imp A\imp C.
\end{displaymath}
\begin{theorem}\label{thm:equi}
The system $\mathsf{S^*_{RBL}}$ is equivalent to $\mathsf{S_{RBL}}$.
\end{theorem}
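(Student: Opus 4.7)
The plan is to establish mutual derivability of the distinguishing axioms. Since every axiom and rule not mentioned in the theorem statement is common to the two systems, it suffices to show that $\mathsf{S_{RBL}}$ proves $(\top^1)$, $(\top^2)$ and $(\mathrm{Tr})$, while $\mathsf{S^*_{RBL}}$ proves $(\mathrm{W}_l)$, $(\mathrm{W}_r)$ and $(\mathrm{RC})$.

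For the inclusion $\mathsf{S_{RBL}}\vdash\mathsf{S^*_{RBL}}$, a single application of $(\mathrm{R1})$ to the axiom $(\mathrm{W}_l)$ (resp.\ $(\mathrm{W}_r)$) delivers $(\top^1)$ (resp.\ $(\top^2)$). For $(\mathrm{Tr})$ I first derive the left monotonicity of $\cdot$, i.e.,\ the admissible rule ``from $X\Rightarrow Y$ infer $X\cdot Z\Rightarrow Y\cdot Z$'', by starting from $Y\cdot Z\Rightarrow Y\cdot Z$ via $(\mathrm{Id})$, converting it to $Y\Rightarrow (Y\cdot Z)\leftarrow Z$ by $(\mathrm{R3})$, composing with the hypothesis by $(\mathrm{Cut})$, and finally applying $(\mathrm{R4})$. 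Then, writing $D:=(A\imp B)\wedge(B\imp C)$, two applications of $(\wedge L)$ followed by $(\mathrm{R2})$ yield $A\cdot D\Rightarrow B$ and $B\cdot D\Rightarrow C$; the axiom $(\mathrm{RC})$ gives $A\cdot D\Rightarrow (A\cdot D)\cdot D$; left monotonicity lifts $A\cdot D\Rightarrow B$ to $(A\cdot D)\cdot D\Rightarrow B\cdot D$; two uses of $(\mathrm{Cut})$ chain these into $A\cdot D\Rightarrow C$; and a concluding $(\mathrm{R1})$ yields $D\Rightarrow A\imp C$, which is $(\mathrm{Tr})$.

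For the inclusion $\mathsf{S^*_{RBL}}\vdash\mathsf{S_{RBL}}$, applying $(\mathrm{R2})$ to $(\top^1)$ and $(\top^2)$ immediately gives $(\mathrm{W}_l)$ and $(\mathrm{W}_r)$. The only non-trivial case is $(\mathrm{RC})$, i.e., $A\cdot B\Rightarrow (A\cdot B)\cdot B$. By $(\mathrm{R1})$ this is equivalent to $B\Rightarrow A\imp((A\cdot B)\cdot B)$. Applying $(\mathrm{R1})$ to the two identity sequents $A\cdot B\Rightarrow A\cdot B$ and $(A\cdot B)\cdot B\Rightarrow (A\cdot B)\cdot B$ produces $B\Rightarrow A\imp(A\cdot B)$ and $B\Rightarrow(A\cdot B)\imp((A\cdot B)\cdot B)$ respectively, and $(\wedge R)$ then combines them into
\[
B\Rightarrow (A\imp(A\cdot B))\wedge((A\cdot B)\imp((A\cdot B)\cdot B)).
\]
Instantiating $(\mathrm{Tr})$ with $A$, $A\cdot B$, $(A\cdot B)\cdot B$ for its three formula parameters yields
\[
(A\imp(A\cdot B))\wedge((A\cdot B)\imp((A\cdot B)\cdot B))\Imp A\imp((A\cdot B)\cdot B),
\]
and a single $(\mathrm{Cut})$ delivers $B\Rightarrow A\imp((A\cdot B)\cdot B)$, as required.

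The main obstacle is this last derivation of $(\mathrm{RC})$ from $(\mathrm{Tr})$, since $(\mathrm{Tr})$ speaks only about iterated basic implications whereas $(\mathrm{RC})$ is a pure product inequality. The bridge is to pass $(\mathrm{RC})$ across residuation into an implicational form in which $A\cdot B$ can play the role of the ``middle'' term in $(\mathrm{Tr})$; once this substitution is made, both conjuncts demanded by $(\mathrm{Tr})$ become trivial residuations of identity sequents, and the proof closes by a single cut.
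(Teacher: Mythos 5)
Your proof is correct and follows essentially the same route as the paper: the backward direction (deriving $(\mathrm{W}_l)$, $(\mathrm{W}_r)$, $(\mathrm{RC})$ from $(\top^1)$, $(\top^2)$, $(\mathrm{Tr})$) is identical, including the cut against the instance of $(\mathrm{Tr})$ with middle term $A\cdot B$. The only difference is presentational: for the forward direction the paper simply cites the algebraic argument in the proof of Theorem~\ref{thm:arba}, whereas you spell out the corresponding sequent derivations (including the admissible left-monotonicity rule for $\cdot$), which is the same argument translated into $\mathsf{S_{RBL}}$.
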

\begin{proof}
It suffices to show that $(\mathrm{W}_l)$, $(\mathrm{W}_r)$ and $(\mathrm{RC})$ are equivalent to $(\top^1)$, $(\top^2)$ and $(\mathrm{Tr})$ respectively. By the proof of theorem \ref{thm:arba}, we know that $(\mathrm{W}_l)$, $(\mathrm{W}_r)$ and $(\mathrm{RC})$ imply $(\top^1)$, $(\top^2)$ and $(\mathrm{Tr})$ respectively. Conversely, assume $\top\Rightarrow A\rightarrow A$. By ($\mathrm{R3}$), we have $A\cdot \top\Rightarrow A$. Assume $A\Rightarrow \top \rightarrow A$. By ($\mathrm{R3}$), we have $\top \cdot A\Rightarrow A$. Finally, assume $(\mathrm{Tr})$ holds. Since $A\cdot B\Rightarrow  A\cdot B$ and $(A\cdot B)\cdot B \Rightarrow (A\cdot B)\cdot B$, by residuation, we obtain $B\Rightarrow A\rightarrow (A\cdot B)$ and $B\Rightarrow (A\cdot B) \rightarrow ((A\cdot B)\cdot B)$. Hence $B\Rightarrow (A\rightarrow (A\cdot B))\wedge((A\cdot B)\rightarrow ((A\cdot B)\cdot B))$. Since $(A\imp (A\cdot B))\wedge((A\cdot B)\rightarrow ((A\cdot B)\cdot B))\Rightarrow A\rightarrow ((A\cdot B)\cdot B)$, one gets $B\Rightarrow A\rightarrow ((A\cdot B)\cdot B)$. By ($\mathrm{R2}$), we have $A\cdot B\Rightarrow (A\cdot B)\cdot B$.
\end{proof}

Now let us prove that $\mathsf{S^*_{RBL}}$ is a conservative extension of $\mathsf{BPL}$ (cf. theorem \ref{thm:cons}). In the whole proof, we use the relational model from \cite{KF13} ($\mathsf{S^*_{RBL}}$-model in definition \ref{def:arblmodel}), in which a tenary relation is used to interpret binary operators. The main tecnique used in the proof is to construct a $\mathsf{S^*_{RBL}}$-model $\mf{J}^\mf{M}$ from a BPL-model $\mf{M}$. By our construction, the model $\mf{J}^\mf{M}$ is a model for the system $\mathsf{S^*_{RBL}}$ (cf. lemma \ref{lemma:arbasound}). This fact is enough for showing that every $\mc{L}_{\mathrm{BPL}}$-formula $A$ provable in $\mathsf{S^*_{RBL}}$ (i.e., $\vdash_{\mathsf{S^*_{RBL}}} \top\Leftrightarrow A$) is also provable in $\mathsf{BPL}$. Let us introduce the relational semantics first.

\begin{definition}\label{def:arblmodel}
An {\em residuated basic frame} is a pair $\mc{F}=(W,R)$ where $W$ is a nonempty set of states, and $R\sub W^3$ is a tenary relation.
An $\mathsf{S^*_{RBL}}$-model is a triple $\mf{J}=( W,R,V)$, where $(W,R)$ is a residuated basic frame and $V:\mathsf{Prop}\imp\wp(W)$ (the powerset of $W$) is a valuation function.
\end{definition}

\begin{definition}
The satisfiability relation $\mf{J},a\models A$ between an $\mathsf{S^*_{RBL}}$-model $\mf{J}$ with state $a$ and a $\mc{L}_{\mathrm{RBL}}$-formula $A$ is defined recursively as follows:
\begin{itemize}
\item $\mf{J},a\models p$, if $a\in V(p)$.
\item $\mf{J},a\models \top$ and $\mf{J},a\not\models \bot$.
\item $\mf{J},a\models A\cdot B$, if there exist $a_2, a_3\in W$ such that $R(a,a_2,a_3)$, $\mf{J},a_2\models A$ and $\mf{J},a_3\models B$.
\item $\mf{J},a \models A\leftarrow B$, if for all $a_1, a_3\in W$ with $R(a_1,a,a_3)$, $\mf{J},a_3\models B$ implies $\mf{J},a_1\models A$.
\item $\mf{J},a\models A\rightarrow B$, if for all $a_1, a_2\in W$ with $R(a_1,a_2,a)$,  $\mf{J},a_2 \models A$ implies $\mf{J},a_1\models B$.
\item $\mf{J},a\models A\wedge B$, if $\mf{J},a\models A$ and $\mf{J},a\models B$.
\item $\mf{J},a\models A\vee B$, if $\mf{J},a\models A$ or $\mf{J},a\models B$.
\end{itemize}
An $\mc{L}_{\mathrm{RBL}}$-formula $A$ is {\em satisfiable}, if there exist a relational model $\mf{J}$ and a state $a$ in $\mf{J}$ such that $\mf{J}, a\models A$. We say that $A$ is {\em true} in $\mf{J}$ (notation: $\mf{J}\models A$), if $\mf{J}, a\models A$ for all states $a$ in $\mf{J}$. An $\mathsf{S^*_{RBL}}$-sequent $A\Imp B$ is {\em true} at $a$ in an $\mathsf{S^*_{RBL}}$-model $\mf{J}$ (notation: $\mf{J},a\models A\Imp B$), if $\mf{J},a\models A$ implies $\mf{J},a\models B$. A sequent $A\Imp B$ is true in $\mf{J}$ (notation: $\mf{J}\models A\Imp B$), if $\mf{J},a\models A\Imp B$ for all states $a$ in $\mf{J}$.
\end{definition}

Now we construct a tenary $\mathsf{S^*_{RBL}}$-model from a binary BPL-model, which will be shown to be a model for the system $\mathsf{S^*_{RBL}}$.

\begin{definition}
Given a BPL-model $\mf{M}=(W, R, V)$, define the $\mathsf{S^*_{RBL}}$-model $\mf{J}^\mf{M}=(W', R', V')$ constructed from $\mf{M}$ as follows:
\begin{itemize}
\item[(1)] $W'=\{a_1,a_2| a\in W\}$
\item[(2)] $R'=\{\langle b_1,b_2,a_1\rangle, \langle b_2,b_1, a_1\rangle, \langle b_1, b_2, a_2\rangle, \langle b_2,b_1,a_2\rangle~|~aRb\}$
\item[(3)] $V'(p)=\{a_i\in W:i\in\{1,2\}~\&~a\in V(p)\}$, for each $p\in\mathsf{Prop}$.
\end{itemize}
\end{definition}
\begin{center}
\includegraphics{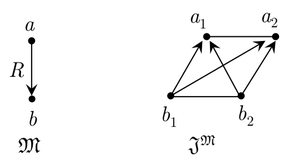}
\end{center}
Henceforth we show some properties of the induced tenray models.
\begin{lemma}\label{lemma:truth1}
Let $\mf{M}=(W,R,V)$ be a BPL-model and $\mf{J}^\mf{M}=(W',R',V')$. For any $\mc{L}_{\mathrm{BPL}}$-formula $A$, $a\in W$ and $a_i\in W'$, $\mf{M},a \models A$ iff $\mathfrak{J}^\mf{M},a_i\models A$.
\end{lemma}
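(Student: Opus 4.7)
The plan is to prove the equivalence by structural induction on the $\mc{L}_{\mathrm{BPL}}$-formula $A$, uniformly in $a\in W$ and the index $i\in\{1,2\}$. The atomic cases are immediate: for a propositional letter $p$, the definition $V'(p)=\{a_i : a\in V(p),\, i\in\{1,2\}\}$ gives $\mf{M},a\models p$ iff $\mf{J}^\mf{M},a_i\models p$, while for $\top$ and $\bot$ the satisfaction clauses are identical in the two semantics. The cases $A=B\wedge C$ and $A=B\vee C$ are routine: both models use the pointwise meet/join clauses, so each direction follows from the induction hypothesis applied to $B$ and $C$ at the same point.

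The only substantive case is implication, $A=B\imp C$. Here I would unfold the ternary clause: $\mf{J}^\mf{M},a_i\models B\imp C$ iff for every pair $x,y\in W'$ with $R'(x,y,a_i)$, $\mf{J}^\mf{M},y\models B$ implies $\mf{J}^\mf{M},x\models C$. By the definition of $R'$, the triples with third coordinate $a_i$ are exactly those of the form $\langle b_1,b_2,a_i\rangle$ and $\langle b_2,b_1,a_i\rangle$ for $b\in W$ with $aRb$. Hence the clause rewrites as: for every $b$ with $aRb$ and every choice of $j,k\in\{1,2\}$ with $\{j,k\}=\{1,2\}$, $\mf{J}^\mf{M},b_j\models B$ implies $\mf{J}^\mf{M},b_k\models C$. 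Applying the induction hypothesis to $B$ and $C$ (which is stated independently of the index) this collapses to: for every $b$ with $aRb$, $\mf{M},b\models B$ implies $\mf{M},b\models C$, i.e.\ $\mf{M},a\models B\imp C$.

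The main obstacle is really the bookkeeping for the implication case, and it is precisely what the construction of $\mf{J}^\mf{M}$ is engineered to handle. The doubling of each world into $a_1$ and $a_2$ guarantees that the validity of a $\mc{L}_{\mathrm{BPL}}$-subformula is index-independent, so that the induction hypothesis can be applied uniformly at $b_1$ and $b_2$; and the symmetric inclusion of both orderings $\langle b_1,b_2,a_i\rangle$ and $\langle b_2,b_1,a_i\rangle$ in $R'$ ensures that the ternary implication clause at $a_i$ degenerates to a universal quantification over the $R$-successors $b$ of $a$, matching the binary implication clause of $\mathsf{BPL}$. Once the implication case is handled, the theorem follows by the usual induction.
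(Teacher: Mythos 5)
Your proof is correct and follows essentially the same route as the paper: induction on the complexity of $A$, with the implication case resolved by observing that the triples of $R'$ with third coordinate $a_i$ are exactly $\langle b_1,b_2,a_i\rangle$ and $\langle b_2,b_1,a_i\rangle$ for $aRb$, so that the ternary clause collapses via the index-independent induction hypothesis to the binary $\mathsf{BPL}$ clause. The only difference is presentational: the paper argues both directions by contraposition from a failure of $A_1\imp A_2$, whereas you rewrite the universally quantified clause directly, which is if anything slightly cleaner.
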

\begin{proof}
By induction on the complexity of $A$. The cases of propositional letters, disjunction and conjunction are easy. We consider only the case of implication. Let $A=A_1\rightarrow A_2$. Assume that $\mathfrak{J}^\mf{M}, a_i\not\models A_1 \rightarrow A_2$. Suppose $a_i= a_1$ without loss of generality. Then there exist $a, b\in W$ such that $Rab$ and $R'(b_1,b_2,a_1)$. By the construction of $\mathfrak{J}^\mf{M}$, we have $\mathfrak{J}^\mf{M},b_2\models A_1$ and  $\mathfrak{J}^\mf{M},b_1\not\models A_2$. By induction hypothesis, $\mathfrak{M},b\models A_1$ and $\mathfrak{M}, b\not\models A_2$. Then $\mathfrak{M},a \not\models A_1\rightarrow A_2$.
Conversely, assume $\mathfrak{M},a\not\models A_1 \rightarrow A_2$. Then there exists $b\in W$ such that $Rab$, $\mathfrak{M},b\models A_1$ and $\mathfrak{M}, b\not\models A_2$. By induction hypothesis, $\mathfrak{J}^\mf{M},b_2\models A_1$ and $\mathfrak{J}^\mf{M}, b_1\not\models A_2$. Since $Rab$, we have $R'(b_1,b_2,a_1)$. Hence $\mathfrak{J}^\mf{M}, a\not\models A_1\rightarrow A_2$.
\end{proof}

\begin{corollary}\label{coro:bpc}
For any $\mc{L}_{\mathrm{BPL}}$-formula $A$ and model $\mf{M}$, $\mf{M}\models A$ iff $\mf{J}^\mf{M}\models A$.
\end{corollary}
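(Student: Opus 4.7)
The plan is to derive Corollary \ref{coro:bpc} as an immediate global-form consequence of the pointwise equivalence established in Lemma \ref{lemma:truth1}. I would first unfold definitions: $\mf{M}\models A$ means $\mf{M},a\models A$ for every $a\in W$, and $\mf{J}^\mf{M}\models A$ means $\mf{J}^\mf{M},b\models A$ for every $b\in W'$. By clause (1) in the construction of $\mf{J}^\mf{M}$, we have $W'=\{a_1,a_2\mid a\in W\}$, so every state of $W'$ has the form $a_i$ for some $a\in W$ and $i\in\{1,2\}$, and conversely every $a\in W$ gives rise to $a_1,a_2\in W'$.

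For the left-to-right direction, I would assume $\mf{M}\models A$, pick an arbitrary $b\in W'$, write $b=a_i$ for some $a\in W$ and some $i\in\{1,2\}$, and then apply Lemma \ref{lemma:truth1} to transport $\mf{M},a\models A$ (which holds by hypothesis) to $\mf{J}^\mf{M},a_i\models A$. Since $b$ was arbitrary, $\mf{J}^\mf{M}\models A$. For the converse, I would assume $\mf{J}^\mf{M}\models A$, take an arbitrary $a\in W$, note that $a_1\in W'$, and apply Lemma \ref{lemma:truth1} in the other direction to read off $\mf{M},a\models A$ from $\mf{J}^\mf{M},a_1\models A$.

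There is essentially no obstacle here: the corollary is just the global (frame-level) reformulation of the pointwise biconditional in Lemma \ref{lemma:truth1}, and the only fact it really uses beyond the lemma is that the assignment $a\mapsto\{a_1,a_2\}$ is surjective onto $W'$, which is built into the definition of $\mf{J}^\mf{M}$. The proof should therefore be only a couple of lines long.
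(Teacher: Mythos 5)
Your proposal is correct and matches the paper's intent exactly: the paper states the corollary without proof, treating it as the immediate global reformulation of the pointwise equivalence in Lemma \ref{lemma:truth1}, which is precisely what you spell out (quantifying over all states and using the surjectivity of $a\mapsto\{a_1,a_2\}$ onto $W'$).
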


\begin{lemma}\label{lemma:arbasound}
For any BPL-model $\mf{M}$ and sequent $A\Imp B$ where $A,B$ are $\mc{L}_{\mathrm{BPL}}$-formulae,
if $\vdash_{\mathsf{S^*_{RBL}}} A\Imp B$, then $\mathfrak{J}^\mf{M}\models A \Rightarrow B$.
\end{lemma}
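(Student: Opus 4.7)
The plan is to proceed by induction on the length of the $\mathsf{S^*_{RBL}}$-derivation of $A \Imp B$. Since intermediate sequents in the derivation may introduce the connectives $\cdot$ and $\leftarrow$ not appearing in $A$ or $B$, I would strengthen the inductive claim to arbitrary $\mc{L}_{\mathrm{RBL}}$-sequents, reducing the statement to showing that $\mathfrak{J}^\mf{M}$ validates every axiom of $\mathsf{S^*_{RBL}}$ at every state and that every rule preserves truth.

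A preparatory step is to observe that $R'$ is characterized by the projection $\pi : a_i \mapsto a$ as $R'(x,y,z) \Leftrightarrow \pi(x) = \pi(y) \wedge \pi(z)\,R\,\pi(x)$; in particular $R'$ is symmetric in its first two arguments. From this I would prove, by induction on formula complexity, an auxiliary ``twin lemma'' extending Lemma \ref{lemma:truth1} to all $\mc{L}_{\mathrm{RBL}}$-formulas $C$, stating $\mathfrak{J}^\mf{M}, a_1 \models C$ iff $\mathfrak{J}^\mf{M}, a_2 \models C$; the new cases $C = D \cdot E$ and $C = D \leftarrow E$ are handled by unfolding the ternary-relation semantics and invoking the symmetry of $R'$.

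Given the twin lemma, the routine axioms (Id), $(\bot)$, $(\top)$, (D) and the lattice axioms, as well as the rules (R1)--(R4), (Cut), and the lattice rules, are verified by standard manipulations using the ternary semantics and residuation. Axiom $(\top^1)$ becomes immediate because every $R'$-predecessor pair $(x,y)$ of $a_i$ consists of twin copies of a common BPL-state, which by the twin lemma satisfy the same formulas. Axiom (Tr) reduces, via the twin lemma, to the statement ``for every $b$ with $aRb$, $b \models A \Imp b \models B$ and $b \models B \Imp b \models C$ together imply $b \models A \Imp b \models C$'', which is trivial transitivity of implication.

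The main obstacle is axiom $(\top^2)$ ($A \Imp \top \imp A$), whose verification requires an ``extended persistency'' result: if $\mathfrak{J}^\mf{M}, a_i \models C$ and $aRb$ then $\mathfrak{J}^\mf{M}, b_j \models C$ for any $j$. For BPL-formulas this follows from Lemma \ref{lemma:truth1} together with BPL-persistency; for $C = D \cdot E$ the transitivity of $R$ transfers the existential witnesses from $a$-predecessors to $b$-predecessors (since $cRa$ and $aRb$ yield $cRb$); the case $C = D \leftarrow E$ is the subtlest, requiring a careful case analysis of how $R'$-triples with middle component $a_i$ relate to those with middle component $b_j$ and repeated appeals to the twin lemma. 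This $\leftarrow$-case is where the argument demands the most care.
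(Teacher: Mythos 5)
Your overall strategy---induction on the $\mathsf{S^*_{RBL}}$-derivation, verifying each axiom and rule in $\mf{J}^\mf{M}$, with Lemma \ref{lemma:truth1} carrying the $\mathsf{BPL}$-specific axioms---is the same as the paper's, and you are right (and more explicit than the paper) that the induction must be strengthened to arbitrary $\mc{L}_{\mathrm{RBL}}$-sequents, since a derivation of an $\mc{L}_{\mathrm{BPL}}$-sequent may pass through formulae containing $\cdot$ and $\leftarrow$ via the residuation rules. Your twin lemma is also correct: the map swapping $x_1$ and $x_2$ for every $x\in W$ is an automorphism of $\mf{J}^\mf{M}$. (Minor point: your characterization of $R'$ admits diagonal triples such as $(b_1,b_1,a_i)$, which the paper's definition excludes; this is harmless given the twin lemma.)

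The gap sits exactly where you flagged ``the most care'': the extended persistency claim is \emph{false} for $\leftarrow$-formulae, so your strengthened inductive statement is false and the induction cannot close. Take $W=\{a,b,c\}$, $R=\{(a,b),(c,b)\}$ (transitive) and $V(p)=\varnothing$. Since $a$ has no $R$-predecessor, $a_1$ never occurs as the middle component of an $R'$-triple, so $\mf{J}^\mf{M},a_1\models p\leftarrow\top$ vacuously; but $R'(b_2,b_1,c_1)$ holds with $b_2\not\models p$, so $\mf{J}^\mf{M},b_1\not\models p\leftarrow\top$ even though $aRb$. Hence $\mf{J}^\mf{M},a_1\not\models\top\imp(p\leftarrow\top)$, and the instance of $(\top^2)$ with $A=p\leftarrow\top$ is refuted in $\mf{J}^\mf{M}$. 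The structural reason is that for $\imp$ and $\cdot$ the relevant neighbours of a successor $b$ of $a$ are, by transitivity, already among the relevant neighbours of $a$, whereas the universal quantifier in the $\leftarrow$-clause at $b_j$ ranges over \emph{all} $R$-predecessors of $b$, which may strictly include states that are not predecessors of $a$; no case analysis or appeal to the twin lemma repairs this. To salvage the argument you must restrict the axiom instances you verify to those that can actually occur---e.g.\ argue (say via the cut-free calculus $\mathsf{L_{RBL}}$ and its subformula property) that $\leftarrow$-free sequents have $\leftarrow$-free derivations, for which your extended persistency does go through---or modify the model construction. For what it is worth, the paper's own proof is silent on non-$\mathsf{BPL}$ instances of $(\top^1)$, $(\top^2)$ and $(\mathrm{Tr})$ (it invokes Lemma \ref{lemma:truth1}, stated only for $\mc{L}_{\mathrm{BPL}}$-formulae), so you have surfaced a genuine subtlety; but as written your proposal does not establish the lemma.
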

\begin{proof}
We need to show that all axioms and rules are admissible in $\mathfrak{J}^\mf{M}$. Axioms except $(\top^1),(\top^2),(\mathrm{Tr})$ are easily shown to be true in $\mf{J}^\mf{M}$. We prove that the three axioms are true in $\mf{J}^\mf{M}$ as follows.

$(\top^1)$ It is obviuos that $\mf{M},a\models A\imp A$ for any $a$ in $\mf{M}$. By lemma \ref{lemma:truth1}, we get $\mf{J},a_i\models A\imp A$. Hence $\mf{J},a_i\models \top\Imp A\imp A$.

$(\top^2)$ Assume $\mf{J},a_i\models A$. By lemma \ref{lemma:truth1}, we get $\mf{M},a\models A$. For any state $b$ in $\mf{M}$ such that $Rab$, we have $\mf{M},b\models A$. Then $\mf{M},a\models \top\imp A$. By lemma \ref{lemma:truth1}, we get $\mf{J}^\mf{M},a_i\models \top\imp A$. Hence $\mf{J}^\mf{M},a_i\models A\Imp \top\imp A$.

$(\mathrm{Tr})$ Assume $\mf{J}^\mf{M}, a_i\models (A\imp B)\wedge(B\imp C)$ but $\mf{J}^\mf{M},a_i\not\models A\imp C$. Then there exist $b_1,b_2\in W'$ such that $R(b_1,b_2,a_i)$, $\mf{J}^\mf{M},b_2\models A$ and $\mf{J}^\mf{M},b_1\not\models C$. By $\mf{J}^\mf{M},b_2\models A$, $\mf{J}^\mf{M},a_i\models A\imp B$ and $R(b_1,b_2,a_i)$, we get $\mf{J}^\mf{M},b_1\models B$. Since $\mf{J}^\mf{M},b\models B\rightarrow C$, by lemma \ref{lemma:truth1}, we get $\mf{M},a\models B\imp C$, $\mf{M},b\models B$, $\mf{M},b\not\models C$. Since $R(b_1,b_2,a_i)$, we get $Rab$. Then by $\mf{M},a\models B\imp C$ and $\mf{M},b\models B$, we get $\mf{M},b\models C$, a contradiction.

Consider the rules $(\wedge \mathrm{L}), (\wedge \mathrm{R}), (\vee \mathrm{L}), (\vee \mathrm{R})$ and $(\mathrm{Cut})$. We check only $(\wedge \mathrm{L})$ and $(\mathrm{Cut})$.
Other rules are shown to be admissible in $\mf{J}^\mf{M}$ similarly. First, for $(\wedge \mathrm{L})$, assume $\mf{J}^\mf{M},a_1\models A_i\Imp B$ and $\mf{J}^\mf{M},a_1\models A_1\wedge A_2$. Then $a_1\models A_i$, and so $a_1\models B$. Hence $\mf{J}^\mf{M},a_1\models A_1\wedge A_2\Imp B$. Second, for $(\mathrm{Cut})$, assume that $a_1\models A\Imp B$ and $a_1\models B\Imp C$ and $a_1\models C$. By definition, we get $a_1\models C$. Hence $a_1\models A\Imp C$.

Now it suffices to show the admisibility of residuation rules.

$(\mathrm{R1})$ Assume $\mathfrak{J}^\mf{M}\models A\cdot B\Rightarrow C$. Suppose
$\mf{J}^\mf{M},a_1\models B$ but $\mf{J}^\mf{M},a_1\not\models A\rightarrow C$ for some $a_1\in W'$. Then there are $b_1,b_2$ such that $R'(b_1,b_2,a_1)$, $\mf{J}^\mf{M},b_2\models A$ and $\mf{J}^\mf{M},b_1\not\models C$. Thus $\mf{J}^\mf{M},b_1\models A\cdot B$. By assumption, $\mf{J}^\mf{M},b_1\models C$, a contradiction.

$(\mathrm{R2})$ Assume $\mathfrak{J}^\mf{M}\models B\Rightarrow A\rightarrow C$. Suppose $\mf{J}^\mf{M},a_1\models A\cdot B$ but $\mf{J}^\mf{M},a_1\not\models C$. Then there exists $b_1\in W'$ such that $R'(a_1,a_2,b_1)$, $\mf{J}^\mf{M},a_2\models A$ and $\mf{J}^\mf{M},b_1\models B$. By assumption, $\mathfrak{J}^\mf{M},b_1\models A\rightarrow C$. By $\mf{J}^\mf{M},a_2\models A$, we get $\mf{J}^\mf{M},a_1\models C$, a contradiction.

$(\mathrm{R3})$ Assume $\mathfrak{J}^\mf{M}\models A\cdot B\Rightarrow C$. Suppose $\mf{J}^\mf{M},b_2\models A$ but $\mf{J}^\mf{M},b_2\not\models C\leftarrow B$. Then there exists $a_1\in W'$ such that $R'(b_1,b_2,a_1)$, $\mf{J}^\mf{M},a_1\models B$ and $\mf{J}^\mf{M},b_1\not\models C$.
Hence $\mf{J}^\mf{M},b_1\models A\cdot B$. By assumption, we have $\mf{J}^\mf{M},b_1\models C$, a contradiction.

$(\mathrm{R4})$ Assume $\mf{J}^\mf{M}\models A\Imp C\leftarrow B$. Suppose $\mf{J}^\mf{M},b_1\models A\cdot B$ but $\mf{J}^\mf{M},b_1\not\models C$.
Then there exists $a_1\in W'$ such that $R'(b_1,b_2,a_1)$, $\mf{J}^\mf{M},b_2\models A$ and $\mf{J}^\mf{M},a_1\models B$. By assumption, $\mf{J}^\mf{M},b_2\models C\leftarrow B$. Hence $\mf{J}^\mf{M},b_1\models C$, a contradiction.
\end{proof}

Now we can show the following conservative extension theorem.

\begin{theorem}\label{thm:cons}
For any $\mc{L}_{\mathrm{BPL}}$-formula $A$, $\vdash_{\mathsf{BPL}} A$ iff $\vdash_{\mathsf{S^*_{RBL}}}\top\Leftrightarrow A$.
\end{theorem}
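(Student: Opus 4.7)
The theorem splits into a routine extension direction $(\Rightarrow)$ and a substantive conservativity direction $(\Leftarrow)$. The plan is to dispatch $(\Leftarrow)$ using the relational machinery just developed, and $(\Rightarrow)$ algebraically via Theorem~\ref{thm:arba}.

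For $(\Leftarrow)$: assume $\vdash_{\mathsf{S^*_{RBL}}}\top\Leftrightarrow A$ with $A$ an $\mc{L}_{\mathrm{BPL}}$-formula, so in particular $\vdash_{\mathsf{S^*_{RBL}}}\top\Rightarrow A$. Fix an arbitrary BPL-model $\mf{M}$ and form the induced $\mathsf{S^*_{RBL}}$-model $\mf{J}^\mf{M}$. By Lemma~\ref{lemma:arbasound}, $\mf{J}^\mf{M}\models\top\Rightarrow A$; since $\mf{J}^\mf{M},a_i\models\top$ at every state, we conclude $\mf{J}^\mf{M},a_i\models A$ at every state, i.e.\ $\mf{J}^\mf{M}\models A$. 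Corollary~\ref{coro:bpc} then transfers this to $\mf{M}\models A$, and because $\mf{M}$ was arbitrary, the Kripke completeness of $\mathsf{BPL}$ (Theorem~1) yields $\vdash_\mathsf{BPL} A$.

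For $(\Rightarrow)$: assume $\vdash_\mathsf{BPL} A$. By the algebraic soundness of $\mathsf{BPL}$ with respect to basic algebras \cite{AA04}, $A$ is valid in every basic algebra. By Theorem~\ref{thm:arba}, the $\mc{L}_{\mathrm{BPL}}$-reduct of any residuated basic algebra is itself a basic algebra, so $A$ is valid in every RBA, i.e.\ $\mb{RBA}\models A$. The soundness and completeness of $\mathsf{S^*_{RBL}}$ with respect to $\mb{RBA}$, which the paper has flagged as a straightforward Lindenbaum--Tarski construction, then delivers $\vdash_{\mathsf{S^*_{RBL}}}\top\Rightarrow A$; the converse sequent $A\Rightarrow\top$ is an instance of the axiom $(\top)$, closing the biconditional.

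The main subtlety lies in the $(\Rightarrow)$ direction: the clean argument depends on $\mathsf{S^*_{RBL}}$-completeness with respect to $\mb{RBA}$, which the paper asserts but does not prove explicitly. A purely syntactic alternative --- induction on the BPL-proof of $A$, deriving each of the twelve BPL axioms as $\top\Rightarrow A$ from $(\top^1),(\top^2),(\mathrm{Tr})$, the residuation rules, and the lattice rules --- handles the axiom cases uniformly, but the modus ponens step is not self-contained: there is no local cut-chain transforming $\top\Rightarrow A$ and $\top\Rightarrow A\imp B$ into $\top\Rightarrow B$ inside $\mathsf{S^*_{RBL}}$ (at the single-RBA level the rule fails, as witnessed by the two-element collapsed RBA in which $a\cdot b=\bot$ forces $\imp$ to be identically $\top$). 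The admissibility of MP must therefore be argued globally, either algebraically as above or by invoking the already-proved $(\Leftarrow)$ direction to reduce to modus ponens in $\mathsf{BPL}$.
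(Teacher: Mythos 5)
Your proof is correct and follows essentially the same route as the paper: the $(\Rightarrow)$ direction goes through algebraic validity in $\mathbb{BLA}$, Theorem~\ref{thm:arba}, and the completeness of $\mathsf{S^*_{RBL}}$ with respect to $\mathbb{RBA}$, while the $(\Leftarrow)$ direction uses Lemma~\ref{lemma:arbasound}, Corollary~\ref{coro:bpc}, and Kripke completeness of $\mathsf{BPL}$ (the paper merely phrases this direction contrapositively). Your closing remarks on the unproved completeness claim and the failure of a local modus-ponens argument are sound observations but not needed for the proof itself.
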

\begin{proof}
Assume $\vdash_{\mathsf{BPL}} A$. Then $\mathsf{\mathbb{BLA}}\models A$ since $\mathsf{BPL}$ is complete with respect to the class $\mathsf{\mathbb{BLA}}$ of all basic algebras. By theorem \ref{thm:arba}, $\mathbb{RBA}\models A$.
Hence $\mathbb{RBA}\models \top\Leftrightarrow A$.
 Then by algebraic completeness of $\mathsf{S^*_{RBL}}$ with respect to $\mathbb{RBA}$, we have $\vdash_{\mathsf{S^*_{RBL}}} \top \Leftrightarrow A$.
Conversely, assume $\not\vdash_{\mathsf{BPL}} A$. By completeness of $\mathsf{BPL}$ with respect to the class of all BPL-models, there exists a  BPL-model $\mf{M}$ such that $\mf{M}\not\models A$. Then by corollary \ref{coro:bpc}, $\mf{J}^\mf{M}\not\models A$. Hence by the definition of truth of a sequent in a $\mathsf{S^*_{RBL}}$-model, $\mf{J}^\mf{M}\not\models \top\Imp A$. By lemma \ref{lemma:arbasound}, we obtain $\not\vdash_{\mathsf{S_{RBL}}}\top\Imp A$.
\end{proof}
By theorem \ref{thm:equi}, we obtain the following theorems immediately.

\begin{theorem}\label{thm:con}
For any $\mc{L}_{\mathrm{BPL}}$-formula $A$, $\vdash_{\mathsf{BPL}} A$ iff $\vdash_{\mathsf{S_{RBL}}} \top\Leftrightarrow A$.
\end{theorem}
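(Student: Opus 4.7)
The plan is to derive Theorem \ref{thm:con} as an immediate corollary of Theorems \ref{thm:equi} and \ref{thm:cons}, since the statement of Theorem \ref{thm:con} is obtained from that of Theorem \ref{thm:cons} simply by replacing $\mathsf{S^*_{RBL}}$ with $\mathsf{S_{RBL}}$, and Theorem \ref{thm:equi} tells us the two systems prove exactly the same sequents.

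More concretely, I would argue as follows. First, note that Theorem \ref{thm:equi} gives, for every $\mc{L}_{\mathrm{RBL}}$-sequent $C \Rightarrow D$, the equivalence
\[
\vdash_{\mathsf{S_{RBL}}} C\Rightarrow D \quad\text{iff}\quad \vdash_{\mathsf{S^*_{RBL}}} C\Rightarrow D.
\]
In particular, instantiating with $C=\top$, $D=A$ and with $C=A$, $D=\top$ and conjoining, we obtain
\[
\vdash_{\mathsf{S_{RBL}}} \top\Leftrightarrow A \quad\text{iff}\quad \vdash_{\mathsf{S^*_{RBL}}} \top\Leftrightarrow A,
\]
for any $\mc{L}_{\mathrm{BPL}}$-formula $A$ (note that such an $A$ is, a fortiori, an $\mc{L}_{\mathrm{RBL}}$-formula, so the equivalence applies). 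Chaining this with Theorem \ref{thm:cons}, which states $\vdash_{\mathsf{BPL}} A$ iff $\vdash_{\mathsf{S^*_{RBL}}} \top\Leftrightarrow A$, yields the desired biconditional.

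There is essentially no obstacle here; the real work has already been done. The nontrivial content lies in Theorem \ref{thm:cons} (the relational-model argument using $\mf{J}^\mf{M}$, lemma \ref{lemma:truth1}, corollary \ref{coro:bpc} and lemma \ref{lemma:arbasound}) together with the mutual derivability established in Theorem \ref{thm:equi} between $(\mathrm{W}_l),(\mathrm{W}_r),(\mathrm{RC})$ and $(\top^1),(\top^2),(\mathrm{Tr})$ via the residuation rules $(\mathrm{R1})$--$(\mathrm{R4})$. If anything deserves a second look, it is that Theorem \ref{thm:equi} was stated as an equivalence of deductive systems over the full language $\mc{L}_{\mathrm{RBL}}$, so one should make explicit that the replacement goes through uniformly for sequents whose only formulas happen to belong to $\mc{L}_{\mathrm{BPL}}$; this is immediate since the derivations in either direction of Theorem \ref{thm:equi} use only the $\mathsf{S_{RBL}}$/$\mathsf{S^*_{RBL}}$ machinery and do not introduce any language restriction. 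Thus Theorem \ref{thm:con} follows in one line from the two preceding results.
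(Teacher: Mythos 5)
Your proposal matches the paper exactly: the authors also obtain Theorem \ref{thm:con} immediately by combining Theorem \ref{thm:cons} with the system equivalence of Theorem \ref{thm:equi}. Your extra remark about the replacement going through for $\mc{L}_{\mathrm{BPL}}$-sequents is a harmless elaboration of the same one-line argument.
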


\begin{corollary}
For every $\mc{L}_{\mathrm{BPL}}$-formula $A$, $\mathsf{\mathbb{BLA}}\models A$ iff $\mathbb{RBA}\models A$.
\end{corollary}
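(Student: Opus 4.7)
The plan is to prove the two directions separately, using Theorem~\ref{thm:arba} for one direction and the conservative extension result (Theorem~\ref{thm:con}) together with algebraic completeness for the other.

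For the forward direction, suppose $\mathbb{BLA} \models A$, where $A$ is an $\mc{L}_{\mathrm{BPL}}$-formula. Given any residuated basic algebra $\mathbf{A} = (\mathsf{A}, \wedge, \vee, \top, \bot, \imp, \leftarrow, \cdot)$, Theorem~\ref{thm:arba} tells us that its reduct $(\mathsf{A}, \wedge, \vee, \top, \bot, \imp)$ is already a basic algebra. Since $A$ contains only BPL connectives, its interpretation under any assignment in $\mathbf{A}$ coincides with its interpretation in this basic algebra reduct; hence $\sigma(A) = \top$ for every assignment $\sigma$. This gives $\mathbb{RBA} \models A$.

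For the converse, suppose $\mathbb{RBA} \models A$. By algebraic completeness of $\mathsf{S_{RBL}}$ with respect to $\mathbb{RBA}$ (remarked on in the discussion after the system is introduced), we obtain $\vdash_{\mathsf{S_{RBL}}} \top \Leftrightarrow A$. Then Theorem~\ref{thm:con} delivers $\vdash_{\mathsf{BPL}} A$, and finally algebraic completeness of $\mathsf{BPL}$ with respect to the variety $\mathbb{BLA}$ (cited in the preliminaries) yields $\mathbb{BLA} \models A$.

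There is no real obstacle here; the corollary is essentially packaging together Theorems~\ref{thm:arba} and~\ref{thm:con} with the two algebraic completeness results already cited. The only subtlety worth making explicit is in the forward direction, namely that $A$ lies in the restricted signature so that validity transfers along the reduct without interference from $\cdot$ and $\leftarrow$; this is immediate by induction on $A$.
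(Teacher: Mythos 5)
Your proof is correct and matches the paper's intended argument: the corollary is just the composition of Theorem~\ref{thm:con} with the two algebraic completeness results (of $\mathsf{BPL}$ with respect to $\mathbb{BLA}$ and of $\mathsf{S_{RBL}}$ with respect to $\mathbb{RBA}$). Your forward direction takes a purely semantic shortcut via the reduct result of Theorem~\ref{thm:arba} instead of passing through provability, but this is exactly the argument the paper itself uses in the first half of the proof of Theorem~\ref{thm:cons}, so there is no substantive difference.
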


\begin{theorem}\label{thm:rblcon}
The logic $\mathsf{RBL}$ is a conservative extension of $\mathsf{BPL}$.
\end{theorem}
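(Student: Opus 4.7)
The plan is to assemble the theorem from pieces already in place, treating it as a direct consequence of the semantic characterization of $\mathsf{RBL}$ together with the corollary just proved. Recall that $\mathsf{RBL}$ was \emph{defined} as $\{A \mid \mathbb{RBA}\models A \text{ and } A \text{ is an } \mc{L}_{\mathrm{RBL}}\text{-formula}\}$, so conservativeness amounts to the claim: for every $\mc{L}_{\mathrm{BPL}}$-formula $A$, $\vdash_{\mathsf{BPL}} A$ iff $\mathbb{RBA}\models A$.

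First I would invoke algebraic completeness of $\mathsf{BPL}$ with respect to $\mathbb{BLA}$ (stated earlier in the preliminaries, attributed to \cite{AA04}), which gives $\vdash_{\mathsf{BPL}} A$ iff $\mathbb{BLA}\models A$ for every $\mc{L}_{\mathrm{BPL}}$-formula $A$. Next I would apply the corollary immediately preceding the theorem, namely $\mathbb{BLA}\models A$ iff $\mathbb{RBA}\models A$ for $\mc{L}_{\mathrm{BPL}}$-formulae. Chaining these two equivalences yields $\vdash_{\mathsf{BPL}} A$ iff $\mathbb{RBA}\models A$, which by definition of $\mathsf{RBL}$ is exactly $A\in\mathsf{RBL}$. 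So the proof is a three-line chain of biconditionals.

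There is essentially no obstacle at this stage: the substantive work was already carried out in the proof of Theorem \ref{thm:cons}, where the relational model $\mf{J}^{\mf{M}}$ was constructed from a BPL-model $\mf{M}$ and shown to validate all axioms and rules of $\mathsf{S^*_{RBL}}$, thereby refuting any unprovable $\mc{L}_{\mathrm{BPL}}$-formula inside the residuated setting. The only thing worth double-checking while writing is that the notions of ``valid in $\mathbf{A}$'' used for basic algebras and for residuated basic algebras agree on $\mc{L}_{\mathrm{BPL}}$-formulae under the obvious reduct, which is guaranteed by Theorem \ref{thm:arba} (every RBA's $\{\wedge,\vee,\top,\bot,\imp\}$-reduct is a basic algebra) together with the fact that every basic algebra arises as such a reduct of some RBA (a consequence of algebraic completeness of $\mathsf{S_{RBL}}$ applied via Theorem \ref{thm:con}). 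Once this compatibility is noted, the theorem follows immediately.
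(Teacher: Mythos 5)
Your proof is correct and is essentially the derivation the paper intends (the paper offers no explicit proof, asserting only that the theorem follows immediately from the preceding results): chain the algebraic completeness of $\mathsf{BPL}$ with respect to $\mathbb{BLA}$, the corollary $\mathbb{BLA}\models A$ iff $\mathbb{RBA}\models A$ for $\mc{L}_{\mathrm{BPL}}$-formulae, and the semantic definition of $\mathsf{RBL}$. One small caveat: your parenthetical claim that every basic algebra arises as the reduct of some residuated basic algebra is neither proved in the paper nor a consequence of the cited completeness results, and it is not needed for your argument --- the corollary already supplies the direction $\mathbb{RBA}\models A\Rightarrow\mathbb{BLA}\models A$, while the other direction needs only Theorem \ref{thm:arba} --- so that remark should be dropped rather than relied upon.
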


\section{A Sequent Calculus for $\mathsf{RBL}$}
The algebraic system $\mathsf{S_{RBL}}$ we introduced in section 3 for  residuated basic algebra is equivalent to $\mathsf{DFNL}$ with weakening and the restricted contraction rule $(\mathrm{RC})$. Let us introduce a sequent formalization $\mathsf{L_{RBL}}$ for $\mathsf{S_{RBL}}$, and show the cut elimination which implies the subformula property for $\mathsf{L_{RBL}}$. For general knowledge on sequent calculi for substructural logics, see Ono's survey paper \cite{Ono03}.

$\mc{L}_{\mathrm{RBL}}$-formula structures are defined recursively as follows: ($\romannumeral1$) every $\mc{L}_{\mathrm{RBL}}$-formula is a formula structure; ($\romannumeral2$) if $\Gamma$ and $\Delta$ are formula structures, then $\Gamma \odot \Delta$ and $\Gamma \owedge \Delta$ are formula structures. Each formula structure $\Gamma$ is associated with a formula $\mu(\Gamma)$ defined as follows:
(i) $\mu(A) = A$ for every $\mc{L}_{\mathrm{RBL}}$-formula $A$;
(ii) $\mu(\Gamma \odot\Delta) = \mu(\Gamma)\cdot \mu(\Delta)$;
(iii) $\mu(\Gamma \owedge \Delta) = \mu(\Gamma)\wedge \mu(\Delta)$.
Sequents are of the form $\Gamma \Rightarrow A$ where $\Gamma$ is an $\mc{L}_{\mathrm{RBL}}$-formula structure and $A$ is an $\mc{L}_{\mathrm{RBL}}$-formula.

The sequent calculus $\mathsf{L_{RBL}}$ consists of the following axioms and rules:
\begin{displaymath}
(\mathrm{Id}) ~ A \Rightarrow A \quad(\top)~ A\Rightarrow \top\quad (\bot)~ \bot\Rightarrow A
\end{displaymath}
\begin{displaymath}
(\mathrm{\imp L}) ~\frac{\Delta \Rightarrow A;  \quad \Gamma[B] \Rightarrow C}{\Gamma[\Delta \odot (A \imp B)] \Rightarrow C} \quad
(\mathrm{\imp R})~ \frac{A \odot \Gamma \Rightarrow B}{\Gamma \Rightarrow A \imp B}
\end{displaymath}
\begin{displaymath}
(\mathrm{\leftarrow L})~ \frac{\Gamma[A] \Rightarrow C;\quad \Delta \Rightarrow B}{\Gamma[(A\leftarrow B) \odot \Delta] \Rightarrow C} \quad
(\mathrm{\leftarrow R})~ \frac{\Gamma \odot B \Rightarrow A}{\Gamma \Rightarrow A\leftarrow B}
\end{displaymath}
\begin{displaymath}
(\mathrm{\cdot L})~\frac{\Gamma[A \odot B] \Rightarrow C}{\Gamma[A \cdot B] \Rightarrow C} \quad
(\mathrm{\cdot R}) ~\frac{\Gamma \Rightarrow A; \quad \Delta \Rightarrow B}{\Gamma \odot \Delta \Rightarrow A \cdot B}
\end{displaymath}
\begin{displaymath}
\mathrm{(\wedge L)}~\frac{\Gamma[A\owedge B]\Rightarrow C}{\Gamma[A\wedge B]\Rightarrow C}\quad
\mathrm{(\wedge R)}~\frac{\Gamma\Rightarrow A\quad \Gamma \Rightarrow B}{\Gamma \Rightarrow A\wedge B}
\end{displaymath}
\begin{displaymath}
(\mathrm{\vee L})~\frac{\Gamma[A]\Rightarrow C\quad \Gamma[B]\Rightarrow C}{\Gamma[A\vee B] \Rightarrow C}\quad
(\mathrm{\vee R)}~ \frac{\Gamma \Rightarrow A_i}{\Gamma\Rightarrow A_1\vee A_2}\quad(i=1,2)
\end{displaymath}
\begin{displaymath}
(\mathrm{\owedge C}) ~\frac{\Gamma[\Delta\owedge \Delta]\Rightarrow A}{\Gamma [\Delta]\Rightarrow A}\quad
(\mathrm{\odot C}) ~ \frac{\Gamma[(\Lambda\odot\Delta)\odot \Delta]\Rightarrow A}{\Gamma[\Lambda\odot\Delta]\Rightarrow A}~(\Lambda~\mathrm{is~not~empty})
\end{displaymath}
\begin{displaymath}
(\mathrm{\owedge E})~\frac{\Gamma[\Delta\owedge \Lambda]\Rightarrow A}{\Gamma[ \Lambda\owedge \Delta]\Rightarrow A}\quad
(\mathrm{Cut}) ~\frac{\Delta \Rightarrow A\quad \Gamma[A] \Rightarrow B}{\Gamma[\Delta] \Rightarrow B}
\end{displaymath}
\begin{displaymath}
(\mathrm{W}^1) ~\frac{\Gamma[\Delta]\Rightarrow A}{\Gamma[\Delta'*\Delta]\Rightarrow A}\quad(\mathrm{W}^2) ~\frac{\Gamma[\Delta]\Rightarrow A}{\Gamma[\Delta*\Delta']\Rightarrow A}\quad(*\in\{\owedge,\odot\})
\end{displaymath}
\begin{displaymath}
(\mathrm{\owedge A^1}) ~ \frac{\Gamma[(\Delta_1\owedge \Delta_2)\owedge \Delta_3] \Rightarrow A}{\Gamma[\Delta_1\owedge (\Delta_2\owedge \Delta_3)] \Rightarrow A}\quad (\mathrm{\owedge A^2}) ~ \frac{\Gamma[\Delta_1\owedge( \Delta_2\owedge \Delta_3)] \Rightarrow A}{\Gamma[(\Delta_1\owedge \Delta_2)\owedge \Delta_3] \Rightarrow A}
\end{displaymath}
The rules $(\imp\mathrm{R})$ and $(\leftarrow\mathrm{R})$ are restricted to nonempty sequences $\Gamma$.
The following semi-associativity $\odot$-rule is admissible in $\mathsf{L_{RBL}}$:
\begin{displaymath}
(\mathrm{\odot A^1}) ~ \frac{\Gamma[(\Delta_1\odot\Delta_2)\odot \Delta_3] \Rightarrow A}{\Gamma[\Delta_1\odot (\Delta_2\odot \Delta_3)] \Rightarrow A}
\end{displaymath}

By standard techniques (cf. \cite{Bus10}), it is easy to show that the sequent calculus $\mathsf{L_{RBL}}$ is equivalent to the algebraic system $\mathsf{S_{RBL}}$, i.e., for any sequent $\Gamma\Imp A$, $\vdash_{\mathsf{S_{RBL}}} \mu(\Gamma)\Imp A$ iff $\vdash_{\mathsf{L_{RBL}}}\Gamma\Imp A$.

For showing the subformula property of $\mathsf{L_{RBL}}$, we prove that the cut rule except the case that the cut formula is $\bot$ or $\top$ can be eliminated. We introduce the generalized mix rule instead of (Cut) and show the `mix elimination', i.e., every sequent derivable in $\mathsf{L_{RBL}}$ has a derivation without using the following mix rule:
\begin{displaymath}
(\mathrm{Mix})~~\frac{\Delta \Rightarrow A; \quad \Gamma[A]\ldots[A] \Rightarrow B}{\Gamma[\Delta]\ldots[\Delta] \Rightarrow B}
\end{displaymath}
where $\Gamma[A]\ldots[A]$ denotes the formula structure containing at least one occurrence of $A$, and $\Gamma[\Delta]\ldots[\Delta]$ denotes the formula structure obtained by replacing at least one occurrence of $A$ in $\Gamma[A]\ldots[A]$ by the formula structure $\Delta$. By $\mathsf{L^m_{RBL}}$, we mean the system obtained from $\mathsf{L_{RBL}}$ by replacing the $(\mathrm{Cut})$ by the above $(\mathrm{Mix})$ rule. Obviously, $\mathsf{L_{RBL}}$ is equivalent to $\mathsf{L^m_{RBL}}$.
\begin{theorem}\label{thm:mix}
A sequent is derivable in $\mathsf{L^m_{RBL}}$ has a derivation without using $(\mathrm{Mix})$ except the case that the mix formula is $\bot$ or $\top$.
\end{theorem}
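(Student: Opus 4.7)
My plan is to prove this by Gentzen-style mix elimination, using a double induction: the outer induction is on the complexity of the mix formula $A$, and the inner induction is on the sum of the heights of the two subderivations of the topmost (Mix). The strategy is to transform any derivation ending in (Mix) (with mix formula not $\bot$ or $\top$) into one whose topmost remaining (Mix), if any, has either a strictly simpler mix formula or premises of strictly smaller total height. The axiom cases are immediate: if either premise is an (Id) instance with the mix formula as its full left or right side, the (Mix) is redundant. The excepted cases for $\top$ and $\bot$ have a principled origin: $\top$ has no left-introduction rule and $\bot$ has no right-introduction rule, so for these mix formulas the standard reduction to a principal pair is unavailable and the induction scheme breaks down.

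For the parametric cases, where $A$ is not principal in at least one of the two premises, I would permute the (Mix) upward past the last rule of that premise, invoking the inner induction on height. The formulation with (Mix) rather than (Cut) is essential here: in the presence of the contraction rules $(\mathrm{\owedge C})$ and $(\mathrm{\odot C})$ and the weakening rules $(\mathrm{W}^1), (\mathrm{W}^2)$, parametric occurrences of the mix formula may be duplicated or freshly introduced by the last rule, and the flexibility of (Mix) to replace any nonempty subset of marked occurrences of $A$ in a single step keeps all upward permutations sound. The remaining subcases involving the logical rules acting on other formulas, as well as $(\mathrm{\owedge E}), (\mathrm{\owedge A^1}), (\mathrm{\owedge A^2})$, and $(\mathrm{\odot A^1})$, go through by the usual permutation arguments.

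The principal cases, in which $A$ is introduced as principal on the right of the left premise and on the left of the right premise, are dispatched by the standard reductions to (Mix) applications on strict subformulas of $A$: for $A = B \cdot C$, split the (Mix) into two (Mix) applications on $B$ and on $C$; for $A = B \imp C$ and $A = C \leftarrow B$, the residuation reductions yield (Mix)s on $B$ and $C$; for $A = B \wedge C$ and $A = B \vee C$, use the standard conjunction and disjunction reductions. Each such reduction strictly decreases the outer induction measure.

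I expect the main obstacle to be the parametric case for $(\mathrm{\odot C})$: the side condition that the substructure $\Lambda$ be nonempty must be preserved after the (Mix) is pushed upward through the contraction, and when occurrences of $A$ appear inside the contracted $\Delta$ on the right-hand premise, the reduction may require several separate (Mix) applications of smaller inner rank on different branches of the derivation. This is precisely where the advantage of the (Mix) formulation over (Cut) is most visible. Analogous but milder bookkeeping is needed for the weakening rules $(\mathrm{W}^1), (\mathrm{W}^2)$ and the non-associative reshaping $(\mathrm{\odot A^1})$, where the context $\Gamma[\cdot]$ must be reconstructed so that the final sequent is of the required shape.
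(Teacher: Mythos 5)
Your proposal is correct and follows essentially the same route as the paper's proof: a primary induction on the complexity of the mix formula with a secondary induction on the sizes of the two premise derivations (the paper orders the two premise lengths lexicographically rather than summing them, which is an inessential difference), a case split between parametric occurrences (permuting Mix upward, with the Mix formulation absorbing the duplication caused by $(\mathrm{\owedge C})$, $(\mathrm{\odot C})$ and the weakening rules) and principal occurrences (the standard residuation, product, conjunction and disjunction reductions, the last two using $(\mathrm{\owedge C})$). Your observation that substitution into a nonempty $\Lambda$ preserves the side condition of $(\mathrm{\odot C})$, and your explanation of why $\top$ and $\bot$ must be excepted, are consistent with what the paper does.
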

\begin{proof}
We prove the admissibility of the rule $(\mathrm{Mix})$ by induction: 1) on the complexity of the mixed formula $A$; 2) on the length of the proof of $\Gamma[A]\ldots[A]\Imp B$; and 3) on the length of the proof of $\Delta\Imp A$. Let $\Delta\Imp A$ be obtained by rule $R_1$, and $\Gamma[A]\ldots[A]$ by rule $R_2$. We consider two cases.

Case 1. $A$ is not produced by $R_1$ or $R_2$. Let us consider the following subcases.

Case 1.1 $R_1=Id$ or $R_2=Id$. Assume $R_1=Id$. Then $\Delta = A$ and hence $\Gamma[\Delta]\ldots[\Delta] = \Gamma[A]\ldots[A]$. Thus we can eliminate this application of $(\mathrm{Mix})$. The case of $R_2=Id$ is similar.

Case 1.2 $R_1=(\bot)$. Note that the case $R_2=(\bot)$ does not apply to the mix rule. When $R_1=(\bot)$, the antecedent of the conclusion $\Gamma[\Delta]\ldots[\Delta]$ contains $\bot$. Hence by $(\bot)$ and $(\mathrm{W})$ we get the conclusion sequent.

Case 1.3 $R_2=(\top)$. Note that the case $R_1=(\top)$ does not occur. Let the premises of $(\mathrm{Mix})$ are $\Delta\Imp A$ and $A\Imp \top$, and the conclusion of $(\mathrm{Mix})$ be $\Delta\Imp \top$. Then $\Delta\Imp\top$ is obtained by $(\mathrm{W}^1)$ and axiom $(\top)$.

Case 1.4 $R_1=(\cdot\mathrm{L})$ or $R_2=(\cdot\mathrm{L})$. Assume $R_1=(\cdot\mathrm{L})$. Let the premise of $R_1$ be $\Delta[C\odot D]\Imp A$. We use mix rule to $\Delta[C\odot D]\Imp A$ and $\Gamma[A]\ldots[A]\Imp B$, and get $\Gamma[\Delta[C\odot D]]\ldots[\Delta[C\odot D]]\Imp B$. Then by the rule $(\cdot\mathrm{L})$ we get $\Gamma[\Delta[C\cdot D]]\ldots[\Delta[C\cdot D]]\Imp B$. The application of mix rule can be eliminated by induction 3). The case of $R_2=(\cdot\mathrm{L})$ is analogous.

Case1.5 $R_1=(\wedge\mathrm{L})$ or $R_2=(\wedge\mathrm{L})$. The proof is quite similar to case 1.4.

Case 1.6  $R_2=(\cdot\mathrm{R})$. Assume that the premises of $R_2$ are $\Gamma_1[A]\ldots[A]\Imp B_1$ and $\Gamma_2[A]\ldots[A]\Imp B_2$. We apply mix rule to $\Delta\Imp A$ and $\Gamma_1[A]\ldots[A]\Imp B_1$. We get $\Gamma_1[\Delta]\ldots[\Delta]\Imp B_1$. Similarly we get $\Gamma_2[\Delta]\ldots[\Delta]\Imp B_2$. Then by $(\cdot\mathrm{R})$, we get $\Gamma_1[\Delta]\ldots[\Delta]\owedge\Gamma_2[\Delta]\ldots[\Delta]\Imp B_1\cdot B_2$. The application of mix rule can be eliminated by induction 2).

Case 1.7 $R_2 = (\wedge\mathrm{R})$. The proof is quite similar to the case 1.6.

Case 1.8  $R_2 = (\vee\mathrm{R})$. The proof is quite similar to the case $R_2=(\cdot\mathrm{L})$.

Case 1.9 $R_1 = (\vee\mathrm{L})$ or $R_2 = (\vee\mathrm{L})$. For the case $R_1 = (\vee\mathrm{L})$,
assume the premises of $R_1$ are  $\Delta[C]\Imp A$ and $\Delta[D]\Imp A$. We apply mix rule to $\Delta[C]\Imp A$ and $\Gamma[A]\ldots[A]\Imp B$, and get $\Gamma[\Delta[C]]\ldots\Gamma[\Delta[C]]\Imp B$. Similar, we get $\Gamma[\Delta[D]]\ldots\Gamma[\Delta[D]]\Imp B$. Then by $(\vee\mathrm{L})$, $\Gamma[\Delta[C\vee D]]\ldots\Gamma[\Delta[C\vee D]]\Imp B$. The case $R_2 = (\vee\mathrm{L})$ is rather similar to the case $R_2=(\cdot\mathrm{R})$.

Case 1.10 $R_1=(\imp\mathrm{L})$ or $R_2=(\imp\mathrm{L})$. For the case $R_1=(\imp\mathrm{L})$, assume the premises of $R_1$ are $\Delta_1\Imp C$ and $\Delta_2[D]\Imp A$. First we apply mix rule to $\Delta_2[D]\Imp A$ and $\Gamma[A]\ldots[A]\Imp B$ and get $\Gamma[\Delta_2[D]]\ldots[\Delta_2[D]]\Imp B$. Then we apply $(\imp\mathrm{L})$ to $\Delta_1\Imp C$ and $\Gamma[\Delta_2[D]]\ldots[\Delta_2[D]]\Imp B$, and get the sequent
$\Gamma[\Delta_2[\Delta_1\odot (C\imp D)]]\ldots[\Delta_2[\Delta_1\odot (C\imp D)]]\Imp B$. By induction 2), the use of mix rule can be eliminated. For the case $R_2=(\imp\mathrm{L})$, let the conclusion be $\Gamma'[\Delta'[A]\ldots[A]\odot(C\imp D)][A]\ldots[A]\Imp B$, and the premises $\Delta'[A]\ldots[A]\Imp C$ and $\Gamma'[D][A]\ldots[A]\Imp B$. We apply mix rule first to $\Delta\Imp A$ and $\Delta'[A]\ldots[A]\Imp C$, $\Gamma'[D][A]\ldots[A]\Imp B$ respectively. Again, apply $(\imp\mathrm{L})$ to both resulting sequents. We obtain the sequent $\Gamma'[\Delta'[\Delta]\ldots[\Delta]\odot(C\imp D)][\Delta]\ldots[\Delta]\Imp B$. By induction 2), applications of mix rule can be eliminated.

Case 1.11 $R_1,R_2\in \{\owedge\mathrm{A}^1, \owedge\mathrm{A}^2, \owedge\mathrm{C}, \odot\mathrm{C}, \owedge\mathrm{E}, \mathrm{W}\}$. Apply the mix rule first to premises, and then use the corresponding rule. It is easy to eliminate applications of mix rule by induction 2).

Case 2. $A$ is created by $R_1$ and $R_2$. We consider the following subcases.

Case 2.1 $A=A_1\cdot A_2$. Let the two premises of mix rule are the following: $\Delta_1\odot\Delta_2\Imp A_1\cdot A_2$ which is obtained from $\Delta_1\Imp A_1$ and $\Delta_2\Imp A_2$ by $(\cdot\mathrm{R})$, and $\Gamma[A_1\cdot A_2][A]\ldots[A]\Imp B$ which is obtained from $\Gamma[A_1\odot A_2][A]\ldots[A]\Imp B$ by $(\cdot \mathrm{L})$.
Now we apply mix rule to $\Delta_1\odot\Delta_2\Imp A_1\cdot A_2$ and $\Gamma[A_1\odot A_2][A]\ldots[A]\Imp B$, and get $\Gamma[A_1\odot A_2][\Delta_1\odot\Delta_2]\ldots[\Delta_1\odot\Delta_2]\Imp B$. Then by applying mix rule to $\Delta_1\Imp A$ and the resulting sequent, we get $\Gamma[\Delta_1\odot A_2][\Delta_1\odot\Delta_2]\ldots[\Delta_1\odot\Delta_2]\Imp B$. Similary, we get $\Gamma[\Delta_1\odot \Delta_2][\Delta_1\odot\Delta_2]\ldots[\Delta_1\odot\Delta_2]\Imp B$. Thus by induction 2) and 1), mix rule can be eliminated.

Case 2.2 $A=A_1\imp A_2$ or $A_1\leftarrow A_2$. The proof is similar to case 2.1.

Case 2.3 $A=A_1\wedge A_2$. Let the two premises of mix rule are the following: $\Delta\Imp A_1\wedge A_2$ which is obtained from $\Delta\Imp A_1$ and $\Delta\Imp A_2$ by $(\wedge\mathrm{R})$, and $\Gamma[A_1\wedge A_2][A]\ldots[A]\Imp B$ which is obtained from $\Gamma[A_1\owedge A_2][A]\ldots[A]\Imp B$ by $(\wedge\mathrm{L})$.
Now first we apply mix rule to $\Delta\Imp A_1\wedge A_2$ and $\Gamma[A_1\owedge A_2][A]\ldots[A]\Imp B$, and get $\Gamma[A_1\owedge A_2][\Delta]\ldots[\Delta]\Imp B$. Then by applying mix rule to $\Delta\Imp A_1$ and $\Delta\Imp A_2$ and the resulting sequent, we get $\Gamma[\Delta\owedge\Delta][\Delta]\ldots[\Delta]\Imp B$. By $(\owedge\mathrm{C})$, we get $\Gamma[\Delta]\ldots[\Delta]\Imp B$.  By induction 2) and 1), mix rule can be eliminated.

Case 2.4 $A=A_1\vee A_2$. The proof is quite similar case 2.3.
\end{proof}

\begin{corollary}[Subformula Property]
If $\Gamma\Imp A$ has a derivation in $\mathsf{L_{RBL}}$, then all formulae in the derivation are $\top,\bot$, or subformulae of $\Gamma,A$.
\end{corollary}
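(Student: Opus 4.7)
The plan is to use the preceding mix-elimination theorem to reduce any derivation to one in which every (Cut)/(Mix) application has cut-formula equal to $\top$ or $\bot$, and then to verify by induction on the length of the (reduced) derivation that each formula occurrence is either $\top$, $\bot$, or a subformula of $\Gamma$ or $A$. Since $\mathsf{L_{RBL}}$ and $\mathsf{L^m_{RBL}}$ are equivalent, a given derivation of $\Gamma\Imp A$ in $\mathsf{L_{RBL}}$ can be replaced by a derivation in $\mathsf{L^m_{RBL}}$, and by Theorem \ref{thm:mix} we may assume that every remaining application of (Mix) has mix-formula $\top$ or $\bot$.

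Next, I would inspect each of the rules of $\mathsf{L_{RBL}}$ individually and check that new formulae introduced upward (from conclusion to premises) are always subformulae of formulae already in the conclusion. The axioms $(\mathrm{Id})$, $(\top)$ and $(\bot)$ are immediate, and for each left- and right-rule of $\cdot, \imp, \leftarrow, \wedge, \vee$, the principal formula in the conclusion decomposes into its immediate subformulae in the premise(s), while the side formulae are preserved. The structural rules $(\owedge \mathrm{C})$, $(\odot \mathrm{C})$, $(\owedge \mathrm{E})$, $(\owedge \mathrm{A}^1)$, $(\owedge \mathrm{A}^2)$, $(\mathrm{W}^1)$, $(\mathrm{W}^2)$ only rearrange or duplicate structures without introducing new formulae, so they preserve the subformula property trivially.

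For the only remaining applications of (Cut), the cut-formula is $\top$ or $\bot$ by the choice of derivation, and this is exactly the exception admitted by the statement. Combining these local observations by induction on the length of the derivation yields the result: the base case is given by axioms, and the inductive step uses the fact that formulae in each premise are (by the inductive hypothesis) $\top$, $\bot$, or subformulae of that premise's endsequent, which by the rule-inspection lifts to subformulae of $\Gamma\Imp A$ (or to $\top$, $\bot$). I do not expect any serious obstacle here, since the heavy lifting is done by the mix-elimination theorem; the only care needed is to treat the restricted (Cut) instances on $\top$ and $\bot$, which the statement already tolerates.
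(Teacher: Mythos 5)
Your proposal is correct and follows exactly the route the paper intends: the corollary is stated as an immediate consequence of Theorem \ref{thm:mix}, and your argument---replace (Cut) by (Mix), eliminate all mix applications except those on $\top$ or $\bot$, then check by induction that every remaining rule only introduces (reading from conclusion to premises) subformulae of the endsequent---is the standard verification the paper leaves implicit. No gap; the rule-by-rule inspection and the treatment of the residual $\top$/$\bot$ cuts are exactly what is needed.
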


\begin{theorem}[Disjunction Property]
For any $\mc{L}_{\mathrm{RBL}}$-formulae $A$ and $B$, if $\top\Imp A\vee B$ is derivable in $\mathsf{L_{RBL}}$, then $\top\Imp A$ or $\top\Imp B$ is derivable.
\end{theorem}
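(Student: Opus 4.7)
The plan is to apply Theorem \ref{thm:mix} to obtain a derivation of $\top \Imp A \vee B$ in which $(\mathrm{Mix})$ is used only on the formulas $\top$ or $\bot$, and then to prove a slightly strengthened claim by induction on proof length: \textit{for any derivable sequent of the form $\Gamma \Imp A \vee B$ where $\Gamma$ is a $\top$-structure (a formula structure all of whose leaves are the constant $\top$), either $\Gamma \Imp A$ or $\Gamma \Imp B$ is derivable in $\mathsf{L_{RBL}}$.} Taking $\Gamma = \top$ will then yield the theorem.

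The strengthening is needed because the structural rule $(\owedge\mathrm{C})$ can produce $\top \Imp A \vee B$ from the premise $\top \owedge \top \Imp A \vee B$, so a naive last-rule analysis of $\top \Imp A \vee B$ in isolation does not close. Under the strengthened claim, most rules are excluded by inspection: the axioms $(\mathrm{Id}), (\top), (\bot)$ and every right introduction rule other than $(\vee\mathrm{R})$ fail to match the disjunctive succedent; every left introduction rule requires a compound leaf formula that no $\top$-structure contains; and in the $(\vee\mathrm{R})$ case the premise gives the conclusion immediately. For each structural rule ($\owedge\mathrm{C}, \odot\mathrm{C}, \owedge\mathrm{E}, \owedge\mathrm{A}^1, \owedge\mathrm{A}^2, \mathrm{W}^1, \mathrm{W}^2$), the premise antecedent is again a $\top$-structure, the induction hypothesis applies to the strictly shorter proof, and the same rule is re-applied to deliver the conclusion.

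This leaves the $(\mathrm{Mix})$ cases. If the mix formula is $\top$ with premises $\Delta \Imp \top$ and $\Gamma'[\top]\ldots[\top] \Imp A \vee B$, the hypothesis that $\Gamma = \Gamma'[\Delta]\ldots[\Delta]$ is a $\top$-structure forces both $\Delta$ and $\Gamma'[\top]\ldots[\top]$ to be $\top$-structures; the induction hypothesis then provides, say, $\Gamma'[\top]\ldots[\top] \Imp A$, and one further application of $(\mathrm{Mix})$ with the trivially derivable $\Delta \Imp \top$ yields $\Gamma \Imp A$. If the mix formula is $\bot$, the premise $\Delta \Imp \bot$ with $\Delta$ a $\top$-structure would have to be derivable, but by soundness against the two-element Boolean residuated basic algebra (where $\mu(\Delta) = \top \not\leq \bot$), no such derivation exists, so this case cannot arise. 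The main subtlety is verifying that each structural rule and the $\top$-mix case genuinely preserve the $\top$-structure property of the antecedent; once that bookkeeping is in place the remainder is a routine case distinction.
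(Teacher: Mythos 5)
Your proof is correct, and it is in fact more careful than the paper's own argument. The paper's proof is a one-liner: after mix elimination it asserts that the last rule in a derivation of $\top\Imp A\vee B$ ``can only be $(\vee\mathrm{R})$.'' Taken literally this is not true: the last rule could also be a structural rule such as $(\owedge\mathrm{C})$ applied as $\top\owedge\top\Imp A\vee B\,/\,\top\Imp A\vee B$, or a residual $(\mathrm{Mix})$ on the formula $\top$, neither of which is removed by Theorem \ref{thm:mix}. Your strengthened induction hypothesis --- replacing the fixed antecedent $\top$ by an arbitrary $\top$-structure $\Gamma$ --- is exactly what is needed to make the last-rule analysis close under these structural inferences, since each of $(\owedge\mathrm{C}), (\odot\mathrm{C}), (\owedge\mathrm{E}), (\owedge\mathrm{A}^{1}), (\owedge\mathrm{A}^{2}), (\mathrm{W}^{1}), (\mathrm{W}^{2})$ and the $\top$-mix has a premise whose antecedent is again a $\top$-structure with a shorter derivation. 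Your treatment of the remaining cases is also sound: left rules are excluded because a $\top$-structure has no compound leaf, right rules other than $(\vee\mathrm{R})$ by the shape of the succedent, and the $\bot$-mix by soundness with respect to the two-element residuated basic algebra (which one checks satisfies $(\mathrm{w}_1)$, $(\mathrm{w}_2)$ and $(\mathrm{c_r})$ when $\cdot$ is taken to be $\wedge$). So while both proofs rest on the same pillar --- mix elimination followed by inspection of the final inference --- yours supplies the generalization that the paper's terser argument implicitly relies on; the price is only the routine bookkeeping that $\top$-structures are preserved upward through the structural rules.
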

\begin{proof}
Assume $\vdash_{\mathsf{L_{RBL}}}\top\Imp A\vee B$. Then $\vdash_{\mathsf{L^m_{RBL}}}\top\Imp A\vee B$. By Theorem \ref{thm:mix}, the last rule can only be $(\vee \mathrm{R})$.
\end{proof}

Since $\mathsf{L_{RBL}}$ is a conservative extension of $\mathsf{BPL}$, we can conclude the following disjunction property of $\mathsf{BPL}$.

\begin{corollary}
For any $\mc{L}_{\mathrm{BPL}}$-formulae $A$ and $B$, if $\vdash_{\mathsf{BPL}} A\vee B$, then $\vdash_{\mathsf{BPL}} A$ or $\vdash_{\mathsf{BPL}} B$.
\end{corollary}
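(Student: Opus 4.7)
The plan is to derive the disjunction property for $\mathsf{BPL}$ directly from the analogous property already established for $\mathsf{L_{RBL}}$ together with conservativity. So the proof is essentially a transport argument, with no new combinatorial work required.

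First, I would assume $\vdash_{\mathsf{BPL}} A \vee B$ for $\mc{L}_{\mathrm{BPL}}$-formulae $A$ and $B$. Since $A \vee B$ is itself an $\mc{L}_{\mathrm{BPL}}$-formula, I can invoke Theorem~\ref{thm:con} to conclude $\vdash_{\mathsf{S_{RBL}}} \top \Leftrightarrow A \vee B$, and in particular $\vdash_{\mathsf{S_{RBL}}} \top \Imp A \vee B$. The equivalence between the algebraic system $\mathsf{S_{RBL}}$ and the sequent calculus $\mathsf{L_{RBL}}$ (noted in the paper right after the presentation of $\mathsf{L_{RBL}}$) then yields $\vdash_{\mathsf{L_{RBL}}} \top \Imp A \vee B$.

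Next, I apply the Disjunction Property for $\mathsf{L_{RBL}}$ (the theorem immediately preceding this corollary) to obtain $\vdash_{\mathsf{L_{RBL}}} \top \Imp A$ or $\vdash_{\mathsf{L_{RBL}}} \top \Imp B$. Translating back through the equivalence of $\mathsf{L_{RBL}}$ and $\mathsf{S_{RBL}}$, this gives $\vdash_{\mathsf{S_{RBL}}} \top \Leftrightarrow A$ or $\vdash_{\mathsf{S_{RBL}}} \top \Leftrightarrow B$ (the converse directions $A \Imp \top$ and $B \Imp \top$ are just instances of axiom $(\top)$). A final invocation of Theorem~\ref{thm:con} in the other direction then yields $\vdash_{\mathsf{BPL}} A$ or $\vdash_{\mathsf{BPL}} B$, as required.

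There is no real obstacle here: all the hard work is already done in Theorem~\ref{thm:mix} (mix-elimination for $\mathsf{L_{RBL}}$), in the resulting Disjunction Property for $\mathsf{L_{RBL}}$, and in Theorem~\ref{thm:con} (conservativity). The only minor point to be careful about is that one must apply Theorem~\ref{thm:con} twice, once in each direction, and that the conservativity theorem is stated for $\mc{L}_{\mathrm{BPL}}$-formulae, which is exactly the hypothesis we have on $A$, $B$, and hence on $A \vee B$, $A$, and $B$ individually.
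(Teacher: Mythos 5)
Your proof is correct and is exactly the argument the paper intends: the paper justifies this corollary in one line by appealing to conservativity (Theorem~\ref{thm:con}) together with the Disjunction Property of $\mathsf{L_{RBL}}$, which is precisely the transport argument you spell out. The extra care you take with the $\mathsf{S_{RBL}}$/$\mathsf{L_{RBL}}$ equivalence and the trivial converse sequents $A \Imp \top$, $B \Imp \top$ is a faithful elaboration of the same route, not a different one.
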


\section{Relationships with Other Logics}
The sequent calculus $\mathsf{LJ}$ for $\mathsf{Int}$ can be obtained from $\mathsf{L_{RBL}}$ 
by replacing $(\odot \mathrm{C})$ by the following full contraction rule and semi-associative rule:
\begin{displaymath}
(\mathrm{\odot C^*}) ~ \frac{\Gamma[\Delta\odot \Delta]\Rightarrow A}{\Gamma[\Delta]\Rightarrow A}
\quad
(\mathrm{\odot A^2}) ~ \frac{\Gamma[\Delta_1\odot(\Delta_2\odot \Delta_3)] \Rightarrow A}{\Gamma[(\Delta_1\odot \Delta_2)\odot \Delta_3] \Rightarrow A}
\end{displaymath}
and dropping the nonempty restriction on $(\imp\mathrm{R})$ and $(\leftarrow\mathrm{R})$.

By $(\odot C^*)$, the binary operator $\cdot$ is equivalent to $\wedge$, and by residuation, the implication becomes Heyting. Thus the following exchange rule in $\mathsf{LJ}$ is derivable from $\odot\mathrm{C}^*$ and weankening rules:
      \begin{displaymath}
(\mathrm{\odot E}) ~ \frac{\Gamma[\Delta\odot \Lambda]\Rightarrow A}{\Gamma[\Lambda\odot\Delta]\Rightarrow A}
   \end{displaymath}
but it is not derivable in $\mathsf{L_{RBL}}$.

The difference between $\mathsf{L_{RBL}}$ and $\mathsf{Int}$ can also be shown by the the following example. Let us consider the two formulae:
$(\dag)$ $p\wedge (p\imp q)\imp (\top\imp q)$; $(\ddag)$ $p\wedge (p\imp q)\imp q$.
The formula $(\ddag)$ is provable in $\mathsf{Int}$ but not in $\mathsf{BPL}$. In fact, $\mathsf{Int}=\mathsf{BPL}+(\ddag)$ (\cite{IK07}).
However, the formula $(\dag)$ is a theorem of $\mathsf{BPL}$ and hence provable in $\mathsf{L_{RBL}}$. The proof goes as follows. First we can derive the sequent for any $\mc{L}_{\mathrm{BPL}}$-formulae $A$ and $B$:
\begin{center}
($\#$) $\top\cdot(A\wedge B)\Imp (\top\cdot A)\cdot B$
\end{center}
in $\mathsf{L_{RBL}}$: from ($\mathrm{\odot C}$) and ($\mathrm{\cdot R}$), we get $\top\cdot (A\wedge B)\Imp (\top\cdot (A\wedge B))\cdot (A\wedge B)$. By $\mathrm{(\wedge L)}$ and ($\mathrm{\cdot R})$, we obtain $(\top\cdot (A\wedge B))\cdot (A\wedge B))\Imp  (\top\cdot A)\cdot B$. Then by (Cut) we get the required sequent. Now we can derive $\top\Imp p\wedge(p\imp q)\imp (\top\imp q)$ in $\mathsf{L_{RBL}}$: Since $(\top\cdot p)\cdot (p\imp q) \Imp q$ is provable, by $(\mathrm{Cut})$ and ($\#$), we get $\top\odot (p\wedge (p\imp q) )\Imp q$. Then by $(\imp\mathrm{R})$ we get the required sequent.

The contraction rule $(\odot\mathrm{C})$ used in above derivation is significant. It is different from $(\odot C^*)$ in $\mathsf{LJ}$ since a nonempty prefix is needed for contraction. This point is also reflected in the difference between the two formulae $(\dag)$ and $(\ddag)$. In the consequent of $(\dag)$, we use $\top$ as a prefix so that $(\dag)$ is provable in $\mathsf{L_{RBL}}$. If we add $\top\imp A\Imp A$ as an additional axiom to $\mathsf{L_{RBL}}$, we can derive $p\wedge(p\imp q)\Imp q$.

Ishii et.al. \cite{IKK01} introduced the sequent system $\mathsf{LBP}$ for $\mathsf{BPL}$, and proved that for any $\mc{L}_{\mathrm{BPL}}$-formula $A$,
$\vdash_{\mathsf{LBP}~}\Imp A$ iff $\vdash_{\mathsf{BPL}} A$. By the conservative extension theorem \ref{thm:con} and, we get the following:
for any $\mc{L}_{\mathrm{BPL}}$-formula $A$, $\vdash_{\mathsf{BPL}} A$ iff $\vdash_{\mathsf{LBP}~}\Imp A$ iff $\vdash_{\mathsf{L_{RBL}}}\top\Leftrightarrow A$. Now let us show one more result on the sequent calculus $\mathsf{LBP}$ through our conservative extension theorems. The following rule in $\mathsf{LBP}$ is used:
\begin{displaymath}
(\mathrm{\imp_{n=0}})~\frac{\Sigma,A\Rightarrow B}{\Sigma \Rightarrow A\imp B}
 \end{displaymath}
\begin{proposition}\label{prop:seq}
For any $\mc{L}_{\mathrm{BPL}}$-formulae $A$ and $B$, $\vdash_{\mathsf{LBP}} B\Imp A$ iff $\vdash_{\mathsf{LBP}}\Imp B\imp A$.
\end{proposition}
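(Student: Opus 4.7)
The plan is to prove the biconditional in two directions, treating each separately.

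For $(\Rightarrow)$, suppose $\vdash_{\mathsf{LBP}} B \Imp A$. A single application of the rule $(\imp_{n=0})$ with $\Sigma$ taken to be empty converts $B \Imp A$ directly into $\Imp B \imp A$, so this direction is immediate and requires nothing beyond the rule itself.

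For $(\Leftarrow)$, suppose $\vdash_{\mathsf{LBP}} \Imp B \imp A$. I proceed by chaining the Ishii--Kashima--Kikuchi completeness \cite{IKK01}, which yields $\vdash_{\mathsf{BPL}} B \imp A$, with our Theorem~\ref{thm:con}, which then gives $\vdash_{\mathsf{S_{RBL}}} \top \Leftrightarrow B \imp A$, in particular $\vdash_{\mathsf{S_{RBL}}} \top \Imp B \imp A$. The remaining task is to reconstruct an $\mathsf{LBP}$-derivation of $B \Imp A$ from $\Imp B \imp A$. To do so, I will use standard sequent-calculus machinery available in $\mathsf{LBP}$: from the identity axioms $B \Imp B$ and $A \Imp A$, apply the left-implication rule of $\mathsf{LBP}$ to derive $B, B \imp A \Imp A$, and then cut this against the given $\Imp B \imp A$ with cut-formula $B \imp A$ to conclude $B \Imp A$.

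The main obstacle is the final reconstruction step in $(\Leftarrow)$, which relies on $\mathsf{LBP}$ having a standard left-implication rule and a (primitive or admissible) cut rule; both are features of the IKK01 formulation but need to be verified against that system. Should either be unavailable in the form required, the natural alternative is to establish invertibility of $(\imp_{n=0})$ at $\Sigma = \emptyset$ directly (by analysis of the last rule in a cut-free $\mathsf{LBP}$-derivation of $\Imp B \imp A$), which amounts precisely to the reverse implication we are trying to prove.
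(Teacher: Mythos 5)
The left-to-right direction is fine and agrees with the paper. The converse, however, contains a genuine gap: your reconstruction derives $B, B\imp A\Imp A$ from identity axioms via ``the left-implication rule of $\mathsf{LBP}$'' and then cuts against $\Imp B\imp A$. But $B, B\imp A\Imp A$ is not derivable in $\mathsf{LBP}$. Since the comma in $\mathsf{LBP}$ is interpreted by $\wedge$, that sequent expresses $B\wedge(B\imp A)\Imp A$, i.e.\ internalized modus ponens, which is precisely the formula $(\ddag)$ that this paper notes separates $\mathsf{Int}$ from $\mathsf{BPL}$ (indeed $\mathsf{Int}=\mathsf{BPL}+(\ddag)$). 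Semantically it fails because $B\imp A$ at a state $x$ only constrains the $R$-successors of $x$, and $R$ is transitive but not reflexive, so $x$ itself is unconstrained. Consequently $\mathsf{LBP}$ cannot contain an unrestricted intuitionistic $(\imp\mathrm{L})$ rule together with cut, and the step you flag as needing verification is exactly the one that breaks. Your fallback --- invertibility of $(\imp_{n=0})$ at empty $\Sigma$ by inspecting the last rule of a cut-free derivation --- is, as you yourself observe, a restatement of the claim rather than an argument, and it is not carried out. The preliminary detour through $\vdash_{\mathsf{S_{RBL}}}\top\Imp B\imp A$ is also never used afterwards.

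The paper instead argues the converse by contraposition, semantically. If $\not\vdash_{\mathsf{LBP}} B\Imp A$, completeness yields a BPL-model with a point $x$ satisfying $B$ but not $A$. Taking the submodel generated by $x$ and adjoining a fresh predecessor $x'$ with $x'Rx$ leaves the theory of $x$ unchanged, and at $x'$ the formula $B\imp A$ now fails, because $x'$ sees a $B$-point refuting $A$. Transporting this countermodel through the ternary model $\mf{J}^{\mf{M}'}$ and Lemma \ref{lemma:arbasound} gives $\not\vdash_{\mathsf{S^*_{RBL}}}\top\Imp B\imp A$, hence $\not\vdash_{\mathsf{BPL}} B\imp A$ and $\not\vdash_{\mathsf{LBP}}\Imp B\imp A$. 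The predecessor-state construction is the essential idea your proposal is missing: a refutation of $B\Imp A$ lives \emph{at} a point, whereas a refutation of $B\imp A$ must live at a point that \emph{sees} it.
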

\begin{proof}
We sketch the proof. The left-to-right direction is obvious by the rule $(\imp_{n=0})$ in $\mathsf{LBP}$. Conversely, assume $\not\vdash_{\mathsf{LBP}} B\Imp A$. By completeness of $\mathsf{LBP}$ (\cite{IKK01}), there exists a BPL-model $\mf{M}$ such that $\mf{M}\not\models B\Imp A$. Thus there exists a state $x$ in $\mf{M}$ such that $\mf{M},x\models B$ but $\mf{M},x\not\models A$. Assume that $\mf{M}$ is generated by $x$ (otherwise, we consider the submodel of $\mf{M}$ generated from $x$, cf.\cite{CZ97}). Let $\mf{M}'$ be a new BPL-model obtained from $\mf{M}$ by adding a new state $x'$ which is accessible to $x$ such that the valuation is the same as that in $\mf{M}$. Then by induction on the complexity of $\mc{L}_{\mathrm{BPL}}$-formulae, it is easy to show that $(\mf{M},x)$ and $(\mf{M}',x)$ are $\mc{L}_{\mathrm{BPL}}$-equivalent. Hence $\mf{M}',x'\not\models B\Imp A$. Thus $\mf{J}^{\mf{M}'}x_i\not\models B\Imp A$. By construction of $\mf{J}^{\mf{M}'}$, it is easy to see that $\mf{J}^{\mf{M}'}\not\models B\cdot\top\Imp A$. By lemma \ref{lemma:arbasound}, $\not\vdash_{\mathsf{S^*_{RBL}}}B\cdot \top\Imp A$. Hence $\not\vdash_{\mathsf{S^*_{RBL}}}\top\Imp B\imp A$.
By conservative extension theorem \ref{thm:con}, $\not\vdash_{\mathsf{BPL}} B\imp A$.
Therefore, $\not\vdash_{\mathsf{LBP}} \Imp B\imp A$.
\end{proof}

\begin{remark}
In \cite{IKK01}, Ishii et.al. proved only that the proposition \ref{prop:seq} holds in the sequent calculus $\mathsf{LFP}$ for Visser's formal provability logic. It does not hold generally for any sequent $\Gamma\Imp A$, since the structural operation comma in sequent systems $\mathsf{LBP}$ and $\mathsf{LFP}$ is interpreted by $\wedge$, while the structural operation $\odot$ means $\cdot$ in $\mathsf{L_{RBL}}$.
\end{remark}

Finally, we consider the implicational fragments.
By an implicational formula $A$ we mean a formula constructing from propositional letters using only $\imp$.
The implicational fragment $\mathsf{BPL}^\imp$ is the set of all implicational formulas provable in $\mathsf{BPL}$. Kikuchi presented in \cite{Kik01} a Hilbert-style axiomatization of $\mathsf{BPL}^\imp$ which consists of the following axioms and inference rule:
\begin{itemize}
\item[](I) $A\imp A$
\item[](K) $A\imp (B\imp A)$
\item[](B$^*$) $(\Gamma\imp (B\imp C))\imp ((\Gamma\imp (A\imp B))\imp (\Gamma\imp (A\imp C)))$
\item[](MP) from $A$ and $A\imp B$ infer $B$.
\end{itemize}
where $\Gamma$ is a finite sequence of formulas, and $\Gamma\imp A$ is defined as follows: (i)
$\Gamma\imp A = A$ if $\Gamma$ is empty; (ii) $B,\Gamma\imp A = B\imp (\Gamma\imp A)$.
The fragment $\mathsf{BPL}^\imp$ is complete with respect to the class of all transitive Kripke models.

The $\imp$-fragment of $\mathsf{RBL}$ is defined as the set $\mathsf{RBL}^\imp$ of all implicational formulae which are valid in $\mathbb{RBA}$.
Then by theorem \ref{thm:rblcon} on conservative extension, it is easy to see that $\mathsf{RBL}^\imp$ is a conservative extension of $\mathsf{BPL}^\imp$.

\section{Conclusion}
The basic implication in Visser's basic propositional logic $\mathsf{BPL}$ can be formalized
in substructural logic as the right residual of the product $\cdot$ (fusion) operation. The resulting
logic is the residuated basic logic $\mathsf{RBL}$ of the variety of residuated basic algebras.
We develop the algebraic system $\mathsf{S_{RBL}}$ which is a formalization of the equational logic of residuated basic algebras. We show that $\mathsf{S_{RBL}}$ is a conservative extension of $\mathsf{BPL}$. Consequently, $\mathsf{RBL}$ is a conservative extension of $\mathsf{BPL}$. The implicational fragment of $\mathsf{BPL}$ is equal to the $\imp$-fragment of $\mathsf{RBL}$. Moreover, we develop the Gentzen-style sequent formalization $\mathsf{L_{RBL}}$ for $\mathsf{S_{RBL}}$, and show that the cut elimination and subformula property hold for $\mathsf{L_{RBL}}$.

Finally, the interpolation lemma, finite model property, and decidability of our sequent calculus $\mathsf{L_{RBL}}$ can be proved by Buszkowski's method \cite{Bus11} for showing interpolation and finite embedding property for classes of residuated algebras. The proof will be presented in a forthcoming paper.

\bibliographystyle{plain}
\bibliography{rbl}

\paragraph{Acknowledgements.}
The authors were supported by the project of China National Social Sciences Fund (Grant no. 12CZX054).

\end{document}